\newcommand{\eps}{\varepsilon}
\newcommand{\N}{{\mathbb N}}
\newcommand{\C}{{\mathbb C}}
\newcommand{\R}{{\mathbb R}}
\newcommand{\tef}{transcendental entire function}
\newcommand{\shf}{subharmonic function}
\newcommand{\sconn}{simply connected}
\newcommand{\mconn}{multiply connected}
\theoremstyle{plain}
\newtheorem{theorem}{Theorem}[section]
\newtheorem{lemma}[theorem]{Lemma}
\theoremstyle{definition}
\theoremstyle{remark}
\theoremstyle{problem}
\theoremstyle{example}
\begin{document}


\title[Baker's conjecture and Eremenko's conjecture]{Baker's conjecture and Eremenko's conjecture for functions with negative real zeros}

\author{P. J. Rippon}
\address{Department of Mathematics and Statistics \\
The Open University \\
   Walton Hall\\
   Milton Keynes MK7 6AA\\
   UK}
\email{p.j.rippon@open.ac.uk}

\author{G. M. Stallard}
\address{Department of Mathematics and Statistics \\
The Open University \\
   Walton Hall\\
   Milton Keynes MK7 6AA\\
   UK}
\email{g.m.stallard@open.ac.uk}

\thanks{2010 {\it Mathematics Subject Classification.}\; Primary 37F10, Secondary 30D05.\\Both authors are supported by the EPSRC grant EP/H006591/1.}




\begin{abstract}
We introduce a new technique that allows us to make progress on two long standing conjectures in transcendental dynamics: Baker's conjecture that a {\tef} of order less than $1/2$ has no unbounded Fatou components, and Eremenko's conjecture that all the components of the escaping set of an entire function are unbounded. We show that both conjectures hold for many {\tef}s whose zeros all lie on the negative real axis, in particular those of order less than $1/2$. Our proofs use a classical distortion theorem based on contraction of the hyperbolic metric, together with new results which show that the images of certain curves must wind many times round the origin.
\end{abstract}

\maketitle

\section{Introduction}
\setcounter{equation}{0} Let $f:\C\to \C$ be a {\tef} and denote by
$f^{n},\,n=0,1,2,\ldots\,$, the $n$th iterate of~$f$. The {\it
Fatou set} $F(f)$ is the set of points $z \in \C$
such that $(f^{n})_{n \in \N}$ forms a normal
family in some neighborhood of $z$.  The complement of $F(f)$ is
called the {\it Julia set} $J(f)$ of $f$. An introduction to the
properties of these sets can be found in~\cite{wB93}.

This paper concerns two conjectures in transcendental dynamics. Eremenko's conjecture, arising from his paper~\cite{E} in 1989, is that the escaping set
\[
I(f) = \{z : f^n(z) \to \infty \mbox{ as } n \to \infty\}
\]
has no bounded components. This conjecture has motivated much current work in transcendental dynamics and it has become apparent that the escaping set plays a key role in the subject. For partial results on Eremenko's conjecture, see~\cite{lR07},~\cite{RRRS} and~\cite{RS05}, for example.

Baker's conjecture, arising from his paper~\cite{iB81} in 1981, is that the Fatou set has no unbounded components whenever the order of the function is less than~$1/2$ or even whenever the function has order at most $1/2$ minimal type. It is known~\cite{Z} that such functions have no unbounded periodic or preperiodic Fatou components but it remains open as to whether such a function can have an unbounded wandering domain, that is, an unbounded component $U$ of the Fatou set such that $f^n(U) \cap f^m(U) = \emptyset$ for $n \neq m$.

Many authors have shown that Baker's conjecture is satisfied provided some regularity condition is imposed on the growth of the maximum modulus but, without any such condition, it is not even known whether the conjecture holds for all functions of order zero. The strongest results in this direction are given in~\cite{HM} and in~\cite{RS08}. A survey of earlier work on this conjecture appears in~\cite{H}.

In this paper we prove that Baker's conjecture holds for a family of symmetric entire functions~$f$ of order less than $1/2$, without any restriction on the regularity of the growth of $M(r)$. To do this we introduce a completely new technique based on the winding of certain image curves. In particular, our new results cover all the examples of functions of order less than $1/2$ that were constructed in~\cite{HM} and~\cite{RS08} as examples to which the techniques of those papers {\it cannot} be applied.

All existing techniques for attacking Baker's conjecture use a `repeated stretching' technique based on the relationship between the maximum modulus and the minimum modulus, which are defined as follows:
\[
 M(r) = M(r,f) = \max_{|z| = r} |f(z)|, \quad m(r) = m(r,f) = \min_{|z| = r} |f(z)|.
\]
For functions of order less than $1/2$ the $\cos \pi\rho$ theorem implies that $m(r)$ is relatively large compared to $M(r)$ for many values of $r$. If, in addition, $M(r)$ is very small or has a certain regularity (see~\cite[Theorems~4 and~5]{RS08}, for example), then it can be shown that the forward images under $f$ of a long curve in an unbounded Fatou component experience repeated radial stretching which contradicts the contraction property of the hyperbolic metric.

As pointed out in~\cite{RS08} and~\cite{RS09a}, this stretching property based on the minimum modulus also implies that the escaping set has a certain `spider's web' structure, described below, which is sufficient to show that Baker's conjecture and also Eremenko's conjecture hold. More precisely, this property implies that the following subset of the escaping set has this spider's web structure:
\[
A_R(f) = \{z:|f^n(z)| \geq M^n(R), \text{ for } n \in \N\},
\]
where $M^n(r)$ denotes the $n$th iterate of $M$ with respect to $r$ and $R>0$ is chosen so that $M(r) > r$ for $r\geq R$. The set $A_R(f)$ is a subset of the {\it fast escaping set} $A(f) = \bigcup_{n \in \N} f^{-n}(A_R(f))$ which has many nice properties -- see~\cite{RS09a}.

We say that a set $E$ is a {\it spider's web} if $E$ is connected and there exists a sequence of simply connected domains $G_n$ such that
\[
\partial G_n \subset E, \; G_n \subset G_{n+1} \mbox{ for } n \in \N \;\mbox{ and }\; \bigcup_{n \in \N}G_n = \C.
\]
As shown in~\cite{RS09a}, if $A_R(f)$ is a spider's web, then $I(f)$ is a spider's web, so Eremenko's conjecture holds, and $f$ has no unbounded Fatou components, so Baker's conjecture holds.

Many functions $f$ of order less than $1/2$ have the property that $A_R(f)$ is a spider's web \cite[Theorem~1.9]{RS09a}, so it is tempting to conjecture that this is the case for {\it all} functions of order less then $1/2$. However, in~\cite{RS11} we show that there are functions of order less than $1/2$, and even of order~0, for which $A_R(f)$ is {\it not} a spider's web. The examples in~\cite{RS11} are also covered by the results in this paper.

The idea behind our proof is that for certain symmetric entire functions $f$ the forward images under $f$ of a long curve in an unbounded Fatou component must experience either repeated radial stretching or large winding round the origin, either of which leads to a contradiction. Our proof also shows that $I(f)$ is a spider's web, so Eremenko's conjecture holds for such functions.

Although the techniques we use here to obtain the winding of image curves rely on particular properties of the symmetric functions considered, it is plausible that ideas of this type could be applied to more general types of entire functions.

Recall that the order $\rho=\rho(f)$ of a {\tef} $f$ is
\[
\rho=\limsup_{r\to\infty}\frac{\log\log M(r,f)}{\log r}\,.
\]
Any function of order less than $1/2$, and indeed of order less than~$1$, has the form
\begin{equation}\label{prod}
f(z)=cz^{p_0}\prod_{n=1}^{\infty}\left(1+\frac{z}{a_n}\right)^{p_n},
\end{equation}
where $p_n\in \{0,1,\ldots\}$, for $n\ge 0$, $c \in \C$ and the sequence $(a_n)$ consists of complex numbers with $|a_n|$ strictly increasing. We show that both Baker's conjecture and Eremenko's conjecture hold for all such functions of order less than $1/2$ in the case that $c\in \R\setminus\{0\}$ and $a_n > 0$ for $n \in \N$. In fact, we prove the following more general result.

\begin{theorem}\label{main}
Let $f$ be a {\tef} of the form
\begin{equation}\label{fproduct}
f(z)=cz^{p_0}\prod_{n=1}^{\infty}\left(1+\frac{z}{a_n}\right)^{p_n},
\end{equation}
where $c\in \R\setminus\{0\}$, $p_n\in \{0,1,\ldots\}$, for $n\ge 0$, and the sequence $(a_n)$ is positive and strictly increasing. In addition, suppose that there exist $m>1$ and $R_0>0$ such that, for all $r \geq R_0$,
\begin{equation}\label{min}
 \mbox{ there exists } \rho \in (r,r^m) \mbox{ with } m(\rho) \geq M(r).
\end{equation}
Then
\begin{itemize}
\item[(a)]
there are no unbounded components of $F(f)$;
\item[(b)]
the set $I(f)$ is a spider's web, and hence is connected;
\item[(c)]
if, in addition,~$f$ has no {\mconn} Fatou components, then
$J(f)$ and $I(f)\cap J(f)$ are both spiders' webs.
\end{itemize}
\end{theorem}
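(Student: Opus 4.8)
The plan is to show that, under hypothesis \eqref{min}, the set $A_R(f)$ is a spider's web, and then invoke the general results of \cite{RS09a} recalled in the introduction to deduce (a), (b) and (c). The subtlety, compared with the standard spider's web criteria, is that \eqref{min} does \emph{not} say that $m(r)$ itself is large for a sequence of radii; it only guarantees that $m(\rho)\ge M(r)$ for \emph{some} $\rho$ in the range $(r,r^m)$, which is too weak to run the usual argument directly. This is exactly where the new winding technique must enter: for the symmetric functions \eqref{fproduct} with positive zeros, I would prove that if the minimum modulus on $|z|=r$ is attained at the negative real point $z=-r$ and is \emph{not} comparable to $M(r)$, then the image $f(\{|z|=r\})$ must wind many times around the origin, and in particular must meet every circle $|w|=\sigma$ with $m(r)\le \sigma\le M(r)$ in a way that lets one build the nested simply connected domains $G_n$. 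Combining the two alternatives --- either $m(r)$ is already large (classical stretching) or the image curve winds (new input) --- one obtains, for a suitable sequence $r_n\to\infty$ with $r_{n+1}\le M(r_n)$ say, a nested sequence of loops $\Gamma_n\subset A_R(f)$ surrounding $0$ with $\Gamma_n\to\infty$, which is the definition of $A_R(f)$ being a spider's web.

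The key steps, in order, would be: (i) fix $R\ge R_0$ with $M(r)>r$ for $r\ge R$, and for each admissible $r$ choose $\rho=\rho(r)\in(r,r^m)$ as in \eqref{min}; (ii) use the product form \eqref{fproduct} together with positivity of the $a_n$ and reality of $c$ to locate where $|f|$ is smallest on $|z|=r$ --- by the symmetry $f(\bar z)=\overline{f(z)}$ and the fact that $|1+z/a_n|$ is minimised on $|z|=r$ at $z=-r$, the minimum modulus $m(r)$ is attained at $z=-r$, and similarly $M(r)=f(r)$; (iii) establish the winding lemma: the curve $\theta\mapsto f(re^{i\theta})$, $\theta\in(-\pi,\pi)$, has argument increasing by a total amount that grows with $\log(M(r)/m(r))$, so that if $M(r)/m(r)$ is large the image crosses each intermediate circle many times, while if $M(r)/m(r)$ is bounded then $m(r)\ge c\,M(r)$ and one is in the classical regime; (iv) in either case produce a loop in $\{|w|\ge M(r')\}$ for the previous scale $r'$ lying in $A_R(f)$ and surrounding the origin; (v) assemble these loops into the nested family $G_n$, verify connectivity of $A_R(f)$, and conclude via \cite{RS09a} that $I(f)$ is a spider's web, that $F(f)$ has no unbounded components, and --- under the extra hypothesis in (c) that there are no {\mconn} Fatou components --- that $J(f)$ and $I(f)\cap J(f)$ are spiders' webs as well.

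The main obstacle I anticipate is step (iii), the quantitative winding estimate, and its interface with step (iv). It is not enough to know that the image curve $f(\{|z|=r\})$ winds around $0$; one needs a \emph{specific} subarc of it, or of a nearby level curve, that is a closed loop contained in $A_R(f)$ --- i.e. every point on it stays above the threshold $M^k(R)$ under all further iterates. Controlling this requires pulling back: one wants a component of $\{z: |f(z)|\ge M(r')\}$ that is a bounded Jordan domain containing $0$, and for that the winding must be shown to create a full annular ``gap'' in the set $\{|f(z)|<M(r')\}$ around some circle. Making the passage from ``the argument of $f(re^{i\theta})$ increases a lot'' to ``$\{|f|\ge M(r')\}$ has a bounded component surrounding $0$'' precise, and doing so uniformly along the sequence $r_n$, is the technical heart of the argument; the classical distortion theorem based on hyperbolic contraction, advertised in the abstract, is presumably what converts the winding of the image curve back into geometric control of these pulled-back loops and ultimately into the contradiction with an unbounded Fatou component.
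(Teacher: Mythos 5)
There is a genuine gap, and it lies in the headline strategy itself. Your plan is to prove that $A_R(f)$ is a spider's web and then quote the general results of \cite{RS09a}. But this cannot work for all functions covered by the theorem: as noted in Remark~3 after Theorem~\ref{main}, the examples of \cite{RS11} satisfy \eqref{fproduct} and \eqref{min} and yet $A_R(f)$ (and $A(f)$) is \emph{not} a spider's web. The reason is visible in the hypothesis: \eqref{min} only provides some $\rho\in(r,r^m)$ with $m(\rho)\ge M(r)$, so any loop you build from the minimum modulus sits as far out as $r^m$, and iterating such loops yields points whose orbits grow only like iterates of $\mu(r)=M(r)^{1/L}$, not like $M^n(R)$. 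This is precisely why the paper replaces $A_R(f)$ by the quite fast escaping set $Q_{\eps,R}(f)$ and proves that \emph{that} set contains a spider's web (Theorem~\ref{main3}(b)); your steps (iv)--(v), which place the nested loops $\Gamma_n$ inside $A_R(f)$, would fail for these examples. Relatedly, part~(a) cannot be obtained as a corollary of the spider's web via \cite{RS09a} in this setting; in the paper it is proved directly, by contradiction, for each simply connected Fatou component.

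The second problem is that the step you yourself identify as the technical heart (your (iii)--(iv)) is both unproved and aimed at the wrong object. The winding of the image of a \emph{full} circle $C(r)$ is just $2\pi$ times the number of zeros enclosed, and by itself it does not produce a bounded component of $\{|f|\ge M(r')\}$ surrounding $0$, nor a loop in the escaping set. What the paper actually needs, and proves in Theorem~\ref{main1}, is winding of the image of a subcontinuum of a curve lying in a (putative unbounded) Fatou component on which $|f|$ is sandwiched between $1/M(t)$ and $M(t)$; this is obtained by comparing the winding of the images of circular arcs avoiding the continuum at two radii, using the convexity of $T(r,u_\lambda)$, the Poisson inequality (Lemma~\ref{Poisson}) and the Milloux--Schmidt inequality (Lemma~\ref{MS}). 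The resulting trichotomy (Theorem~\ref{main2}: stretch outward, stretch inward, or wind by at least $2\pi$) is then exploited quite differently from what you propose: repeated radial stretching contradicts the hyperbolic-metric distortion estimate (Lemma~\ref{dist}) in the Fatou case, while the winding case is killed either because it would create a multiply connected Fatou component (impossible by Lemma~\ref{connectivity} since forward images of a simply connected component are simply connected) or, in the escaping-set case, because the winding image curve must meet an unbounded component of $A_{\tilde R/2}(f)$ (Lemma~\ref{AR-component}), forcing the original point into $Q_{\eps,R}(f)$. So the hyperbolic distortion theorem is not, as you suggest, a device for converting winding into pulled-back loops; it handles the complementary (stretching) alternative, and the spider's web is assembled afterwards from the fact that every component of $Q_{\eps,R}(f)^c$ far from the origin is trapped in an annulus $A(r,r^L)$.
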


{\it Remarks}\;\;1. There are many classes of functions for which the hypothesis \eqref{min} is satisfied - see, for example,~\cite[Section 8]{RS09a}. In particular, it is satisfied whenever~$f$ has order less than $1/2$; this was proved by Baker \cite[Satz 1]{iB58} and also follows from the version of the $\cos \pi \rho$ theorem proved by Barry~\cite{Ba}.

2. The proof of Theorem~\ref{main} uses two properties of the symmetric functions~$f$ under consideration, namely, that $f(\overline z)=\overline{f(z)}$, for $z\in\C$, and, for each $r>0$, $|f(re^{i\theta})|$ is strictly decreasing for $0\le \theta\le \pi$. Entire functions $f$ of the form $f(z)=g(z^q)$, where $g$ is a function of the form considered in Theorem~\ref{main} and $q\in\N$, have similar properties in relation to sectors of opening $2\pi/q$ and the proof of Theorem~\ref{main} can be modified to give the same conclusions for these functions. We omit the proof of this more general case.

Similar techniques can also be used to show that the result of Theorem~\ref{main} holds for functions of the same form as those in Theorem~\ref{main} but with, in addition, a finite number of zeros on the positive real axis.

3. In~\cite{RS11} it is shown that there are functions which satisfy the hypotheses of Theorem~\ref{main} and for which $A_R(f)$ and $A(f)$ both fail to have the structure of a spider's web. By part (b), these are examples of entire functions for which $I(f)$ is a spider's web but $A(f)$ is not a spider's web -- this answers a question in~\cite{RS09a}.

4. To prove part~(a), it is sufficient by a result of Zheng \cite{Z} to show that such functions have no unbounded wandering domains. In future work we plan to build on the techniques in this paper to show that there are no unbounded wandering domains for functions of order less than~$1$ of the type considered in Theorem~\ref{main}, even when condition~\eqref{min} is not satisfied. This raises the question as to whether Baker's conjecture should be strengthened to say, for example, that a function of order less than~$1$ has no unbounded wandering domains. We note that there are functions of order~$1/2$ with unbounded periodic Fatou components; for example, $f(z)=(\sinh\sqrt{z})/\sqrt{z}$ has an attracting basin containing the negative real axis, and other such examples are given in~\cite{iB81}.\\

The structure of this paper is as follows. In Section~2, we state three main results, Theorems~\ref{main1},~\ref{main2} and~\ref{main3}. The first two concern the winding of image curves, and the third is a more precise version of Theorem~\ref{main}. Theorem~\ref{main2} is deduced easily from Theorem~\ref{main1} in Section~2. Section~3 is devoted to proving Theorem~\ref{main1} and Section~4 contains some general results needed for the proof of Theorem~\ref{main3}, which is given in Section~5.

\section{Methodology}
\setcounter{equation}{0}
In this section we state three theorems that together give greater insight into the method that we use to prove Theorem~\ref{main}. The first two theorems show that for entire functions of the type covered by Theorem~\ref{main} the images of certain continua~$\gamma$ must wind many times round the origin, while the third states a number of detailed properties of such functions, including those given in Theorem~\ref{main}. We note that for our present purposes it would be sufficient to prove the first two theorems in the case when~$\gamma$ is a curve rather than a continuum, but we state these theorems for a continuum because of possible future applications.

We begin by introducing some notation. If $\gamma$ is a plane curve with an associated parametrisation and $\gamma$ meets no zeros of~$f$, then we denote the net change in the argument of $f(z)$ as $z$ traverses $\gamma$ by $\Delta\!\arg f(\gamma)$.

If $\gamma$ is a plane continuum having the property that all the zeros of~$f$ lie in the unbounded complementary component of $\gamma$, and $z_0,z'_0$ is any pair of distinct points in~$\gamma$, then we denote the net change in the argument of $f(z)$ as $z$ traverses~$\gamma$ from $z_0$ to $z'_0$ by $\Delta\!\arg (f(\gamma);z_0,z'_0)$. This quantity is defined by choosing
\begin{itemize}
\item
any {\sconn} domain, $G$ say, that contains $\gamma$ but no zeros of $f$,
\item a branch, $g$ say, of $\log f$ in~$G$,
\end{itemize}
and putting
\begin{equation}\label{Deltadef}
\Delta\!\arg (f(\gamma);z_0,z'_0)=\Im (g(z'_0))-\Im (g(z_0)).
\end{equation}

Next, for $r>0$, we write $C(r) = \{z: |z| = r\}$ and, for $0<r_1<r_2$, we write
\[
A(r_1,r_2)=\{z:r_1<|z|<r_2\}\;\;\text{and}\;\;\overline A(r_1,r_2)=\{z:r_1\le |z|\le r_2\}.
\]

We also recall the following version of Hadamard convexity: for a {\tef} $f$, there exists a constant $R_1=R_1(f)>0$ such that
\begin{equation}\label{Had}
M(r^c)\ge M(r)^c, \;\;\text{for } r\ge R_1,\; c>1;
\end{equation}
see~\cite[Lemma~2.1]{RS08}. Throughout the paper $R_1$ denotes the constant in \eqref{Had}.
\begin{theorem}\label{main1}
Let $f$ be a {\tef} of the form
\begin{equation}\label{prod}
f(z)=cz^{p_0}\prod_{n=1}^{\infty}\left(1+\frac{z}{a_n}\right)^{p_n},
\end{equation}
where $c\in\R\setminus\{0\}$, $p_n\in \{0,1,\ldots\}$, for $n\ge 0$, and the sequence $(a_n)$ is positive and strictly increasing, and let $t$ and $a$ be positive numbers such that
\begin{equation}\label{tanda}
t\ge R_1,\;\;t^{a/4}\ge \frac{48\sqrt 2}{\pi}\; \mbox{ and }\; at^{a/4}\ge \frac{384\sqrt 2}{\pi}\,.
\end{equation}

If $\gamma$ is any continuum in $\{z: \Im z \geq 0\}$ that meets both $C(t)$ and $C(t^{1+a})$ with
\begin{equation}\label{fsmall}
 1/M(t) < |f(z)| < M(t), \mbox{ for } z \in \gamma,
\end{equation}
then there exists a continuum $\Gamma\subset \gamma\cap\overline A(t,t^{1+a})$ and $z_0, z'_0\in\Gamma$ such that
\[
\Delta\!\arg(f(\Gamma);z_0,z'_0) \geq  \frac{\pi^2 t^{a/4} \log M(t)}{48\sqrt 2 \log t}\,.
\]
\end{theorem}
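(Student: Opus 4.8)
The plan is to work in the closed upper half-plane and exploit the two symmetry features of $f$: $f(\overline{z})=\overline{f(z)}$, and $\theta\mapsto|f(re^{i\theta})|$ is strictly decreasing on $[0,\pi]$ (since each factor $|1+re^{i\theta}/a_n|$ is). These give $M(r)=|f(r)|$, $m(r)=|f(-r)|$, and force $\gamma$ into the region $W:=\{z:\Im z\ge 0,\ |f(z)|<M(t)\}$. As a continuum meeting $C(t)$ and $C(t^{1+a})$, $\gamma$ meets every $C(\rho)$ with $\rho\in[t,t^{1+a}]$; on such a circle $|f|$ runs over $[m(\rho),M(\rho)]$, so $|f|<M(t)$ on $\gamma$ forces $m(\rho)<M(t)$ for all such $\rho$, and forces every point of $\gamma\cap C(\rho)$ to have argument exceeding the angle $\theta_0(\rho)$ at which $|f(\rho e^{i\theta_0(\rho)})|=M(t)$. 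Thus $W$ is a half-disc of radius $t$ with a tentacle $\{\rho e^{i\theta}:\rho>t,\ \theta_0(\rho)<\theta\le\pi\}$ along the negative real axis, down which $\gamma$ must travel to reach $C(t^{1+a})$. Hadamard convexity \eqref{Had} gives $\log M(\rho)\ge(\log\rho/\log t)\log M(t)$ on $[t,t^{1+a}]$, so with $m(\rho)<M(t)$ the ratio $M(\rho)/m(\rho)$ is at least $M(t)^{\log(\rho/t)/\log t}$; since $\log(M(\rho)/m(\rho))=\sum_n p_n\log\bigl((a_n+\rho)/|a_n-\rho|\bigr)$, this means $f$ has many zeros at radii near $[t,t^{1+a}]$ --- the source of the winding.

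The engine is a termwise estimate turning the modulus deficit into argument. Writing $g=\log f=\log c+p_0\log z+\sum_n p_n\log(1+z/a_n)$ on a simply connected zero-free domain containing the subcontinuum $\Gamma$ of $\gamma$ to be chosen, one has $\Delta\!\arg(f(\Gamma);z_0,z'_0)=p_0(\arg z'_0-\arg z_0)+\sum_n p_n\bigl(\arg(1+z'_0/a_n)-\arg(1+z_0/a_n)\bigr)$, while the deficit $\log M(|z|)-\log|f(z)|$ at a point $z$ of $\Gamma$ equals $\sum_n p_n\log\bigl((1+|z|/a_n)/|1+z/a_n|\bigr)$. The basic inequality is that for $\theta\in(0,\pi)$ bounded away from $\pi$ and all $s>0$, $\arg(1+se^{i\theta})\ge\kappa(\theta)\log\bigl((1+s)/|1+se^{i\theta}|\bigr)$ for a positive constant $\kappa(\theta)$ (essentially $\cot(\theta/2)$, the extremal case being $s\to 0$); summing over the zeros, at a point $z$ of $\gamma$ of moderate argument $\arg f(z)$ is at least $\kappa(\arg z)$ times the deficit there, hence at least a fixed multiple of $(\log(|z|/t)/\log t)\log M(t)$. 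The companion fact that $\Im(zf'(z)/f(z))=\sum_n p_n\Im(z/(z+a_n))>0$ on the open upper half-plane shows $\arg f$ increases along rays, which governs the accumulation of winding as $\gamma$ moves outward.

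One then locates $\Gamma,z_0,z'_0$ by a dichotomy along the tentacle, at the scale of sub-annuli of $\overline A(t,t^{1+a})$ --- a refinement of the stretching-versus-winding alternative of the introduction. Either across a sub-annulus of adequate logarithmic width $\gamma$ stays at arguments bounded away from $\pi$, and the termwise estimate, applied where the deficit is largest and combined with the ray-monotonicity, yields the winding directly; or $\gamma$ is squeezed towards the negative real axis at some radius $\rho$, which --- since then $|f(\rho e^{i\theta})|$ only falls to $M(t)$ at an argument near $\pi$ --- forces $\log(M(\rho)/m(\rho))$, hence the number of zeros in $\{|z|<\rho\}$, to be large, so $\arg f$ near $-\rho$, being $\arg c$ plus $\pi$ times that number up to a controlled error, is already large; comparing with the value of $\arg f$ at a point of $\gamma$ of small argument on $C(t)$ finishes this case.

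The hard part is organising this so the sub-annulus contributions accumulate into the stated bound with the given constants --- the number $1/\sqrt2=\sin(\pi/4)$ marks the cut-off between moderate and extreme arguments, and the factor $t^{a/4}$ (far larger than $\pi$ times any single zero count when $a$ is not small) presumably arises by summing the winding over many sub-annuli rather than from one estimate. One must also handle the routine technicalities: extracting a subcontinuum $\Gamma\subset\gamma\cap\overline A(t,t^{1+a})$ still joining $C(t)$ and $C(t^{1+a})$ (a boundary-bumping argument, using $|f|>1/M(t)$ on $\gamma$ to stay clear of zeros), fixing the branch $g$ of $\log f$ so that $\Delta\!\arg(f(\Gamma);z_0,z'_0)$ is well defined, and controlling the mild non-monotonicity of $\theta\mapsto\arg f(\rho e^{i\theta})$ near $\theta=\pi$.
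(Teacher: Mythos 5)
The central difficulty is that your outline contains no mechanism capable of producing the factor $t^{a/4}$ in the stated lower bound. Your only quantitative growth inputs are the trivial bound $m(\rho)<M(t)$ on circles crossed by $\gamma$ together with Hadamard convexity \eqref{Had}, and these give a modulus deficit $\log M(|z|)-\log|f(z)|$ of order at most $\frac{\log(|z|/t)}{\log t}\log M(t)\le a\log M(t)$ on $\gamma$. Even with the sharp deficit-to-argument constant $\cot(\theta/2)$ this yields winding of order $a\log M(t)$, whereas the theorem demands $\frac{\pi^2t^{a/4}\log M(t)}{48\sqrt2\log t}$; with, say, $a$ fixed and $t$ large (allowed by \eqref{tanda}), $t^{a/4}/\log t$ dwarfs $a$, so you fall short by an unbounded factor. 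Your guess that $t^{a/4}$ "arises by summing over many sub-annuli" cannot rescue this: the total deficit available across all of $\overline A(t,t^{1+a})$ from $|f|<M(t)$ plus convexity is $O(a\log M(t))$ however you partition, and only $O(a)$ sub-annuli of logarithmic width $\log t$ fit. In the paper the exponential factor comes from a single application of the Milloux--Schmidt inequality (Lemma~\ref{MS}) to $u_M=\log^+(|f|/M(t))$: because $\Gamma$ crosses every circle $C(r)$, $t<r<t^{1+a}$, the minimum of $u_M$ on each such circle vanishes, and Lemma~\ref{MS} forces $B_M(t^{1+a/2})\le\frac{4\sqrt2}{\pi}t^{-a/4}B_M(t^{1+a}/2)$, i.e.\ $\log M$ must grow by a factor of order $t^{a/4}$ across the outer half of the annulus; this forced growth is then converted into winding via Lemma~\ref{Poisson}, the convexity of $T_\lambda$ in $\log r$, and Lemmas~\ref{winding1} and~\ref{wind-growth}. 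Some harmonic-measure estimate of this kind is indispensable, and it is exactly what is missing from your plan.

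There are also gaps in the "squeezed" branch of your dichotomy. First, "$\log(M(\rho)/m(\rho))$ large forces many zeros" is false at a single radius: one zero $-a_n$ with $a_n$ extremely close to $\rho$ makes the single term $\log\frac{a_n+\rho}{|a_n-\rho|}$ as large as you please, so a zero count requires integrating over a range of radii, which you do not carry out. Second, your closing comparison presupposes a point of $\gamma$ of small argument on $C(t)$, which is not guaranteed ($\gamma$ may meet $C(t)$ only near the negative real axis); and monotonicity of $\arg f$ along rays does not by itself control $\Delta\!\arg(f(\Gamma);z_0,z'_0)$ along the continuum. The paper sidesteps both issues by comparing the image windings of the two circular arcs $C_1\subset C(t)$ and $C_2\subset C(t^{1+a})$ running from the positive real axis to their first meeting with $\Gamma\cup\Gamma^*$, and then transferring the difference to $\Gamma$ using the symmetry $f(\overline z)=\overline{f(z)}$ and the branch $F=\log f$ on $\C\setminus(-\infty,0]$. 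Your termwise inequality $\arg(1+se^{i\theta})\ge\cot(\theta/2)\log\bigl((1+s)/|1+se^{i\theta}|\bigr)$ is a genuinely different and attractive device that could replace Lemma~\ref{winding1} at points of moderate argument, but without a Milloux--Schmidt type step the proposal cannot reach the stated bound.
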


{\it Remark}\quad We note that a special case of Theorem~\ref{main1} occurs when \eqref{fsmall} is replaced by the condition $|f|=\lambda$, where $1/M(t)<\lambda<M(t)$, in which case $\gamma$ is a level curve of $f$.

The proof of Theorem~\ref{main1} is given in Section~3. Our second theorem has an additional assumption on the function $f$; since this theorem follows easily from Theorem~\ref{main1} we give the proof here.
\begin{theorem}\label{main2}
Let $f$ be a {\tef} of the form
\begin{equation}\label{prod2}
f(z)=c z^{p_0}\prod_{n=1}^{\infty}\left(1+\frac{z}{a_n}\right)^{p_n},
\end{equation}
where $c\in\R\setminus\{0\}$, $p_n\in \{0,1,\ldots\}$, for $n\ge 0$, and the sequence $(a_n)$ is positive and strictly increasing. In addition, suppose that there exist $m>1$ and $R_0>0$ such that, for all $r \geq R_0$,
\begin{equation}\label{min2}
 \mbox{ there exists } \rho \in (r,r^m) \mbox{ with } m(\rho) \geq M(r),
\end{equation}
and let $s,l,b>0$ satisfy
\begin{equation}\label{conditions}
s\ge \max\{1,R_0,R_1\},\;l\geq \max\{m,1+b\},\; M(s) \geq s^2, \; s^{b/4}\ge \frac{48\sqrt 2}{\pi}\;\mbox{and}\;\frac{bs^{b/4}}{l-b}\ge \frac{384\sqrt 2}{\pi}\,.
\end{equation}

If $\gamma$ is any continuum in $\{z: \Im z \geq 0\}$ that meets both $C(s)$ and $C(s^l)$, then at least one of the following must hold:
\begin{itemize}
\item[(1)] $f(\gamma)$ meets both $C(M(s))$ and $C(M(s)^{l - b})$;

\item[(2)] $f(\gamma)$ meets both $C(M(s)^{1/l})$ and $C(M(s))$;

\item[(3)] there exists a continuum $\Gamma \subset \gamma \cap \overline A(s^{l-b},s^l)$ and $z_0, z'_0\in\Gamma$ such that
\[
f(\Gamma) \subset \overline A(M(s)^{1/l}, M(s)^{l-b})
\]
and
\[
\Delta\!\arg(f(\Gamma);z_0,z'_0) \geq  \frac{\pi^2 s^{b/4} \log M(s^{l -
b})}{48\sqrt 2 \log s^{l - b}} \geq  \frac{\pi^2 s^{b/4} \log M(s)}{48\sqrt 2  \log s}\ge 2\pi.
\]
\end{itemize}
\end{theorem}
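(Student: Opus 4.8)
The plan is to deduce Theorem~\ref{main2} from Theorem~\ref{main1} by applying the latter with the choices $t=s^{l-b}$ and $a=b/(l-b)$, so that $t^{1+a}=s^{l}$ and $t^{a/4}=s^{b/4}$. With these choices the hypotheses \eqref{conditions} translate directly into \eqref{tanda}: since $l\ge 1+b$ we have $l-b\ge 1$, hence $t=s^{l-b}\ge s\ge R_1$; the condition $s^{b/4}\ge 48\sqrt2/\pi$ is precisely $t^{a/4}\ge 48\sqrt2/\pi$; and $bs^{b/4}/(l-b)\ge 384\sqrt2/\pi$ is precisely $at^{a/4}\ge 384\sqrt2/\pi$. (Note that $s^{b/4}\ge 48\sqrt2/\pi>1$ forces $s>1$, so that $\log s>0$; this is used at the end.)

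Before applying Theorem~\ref{main1} we need the modulus bound $1/M(t)<|f|<M(t)$ along a suitable continuum, and the key step is to show that $f(\gamma)$ must meet $C(M(s))$ regardless. To see this, apply \eqref{min2} with $r=s$ (valid since $s\ge R_0$) to find $\rho\in(s,s^{m})\subseteq(s,s^{l})$ with $m(\rho)\ge M(s)$, using $l\ge m$ and $s>1$. As $\gamma$ is a continuum meeting $C(s)$ and $C(s^{l})$, the continuous map $z\mapsto|z|$ attains every value in $[s,s^{l}]$ on $\gamma$; in particular $\gamma$ contains a point of modulus $\rho$, where $|f|\ge m(\rho)\ge M(s)$, and a point of modulus $s$, where $|f|\le M(s)$. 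Since $f(\gamma)$ is connected, $M(s)$ lies in $\{|f(z)|:z\in\gamma\}$, i.e.\ $f(\gamma)$ meets $C(M(s))$.

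Consequently the three alternatives are exhaustive. If $f(\gamma)$ also meets $C(M(s)^{l-b})$ we are in case~(1); if $f(\gamma)$ also meets $C(M(s)^{1/l})$ we are in case~(2). Otherwise $f(\gamma)$ is a connected set meeting $C(M(s))$ but disjoint from both $C(M(s)^{1/l})$ and $C(M(s)^{l-b})$ (and $M(s)^{1/l}<M(s)<M(s)^{l-b}$, since $M(s)\ge s^{2}>1$), so $f(\gamma)$ lies entirely in the open annulus between these circles: $M(s)^{1/l}<|f(z)|<M(s)^{l-b}$ for all $z\in\gamma$. By Hadamard convexity \eqref{Had} (with $r=s\ge R_1$ and exponent $l-b$, an equality when $l-b=1$) this gives $|f|<M(s)^{l-b}\le M(s^{l-b})=M(t)$ on $\gamma$, while $|f|>M(s)^{1/l}>1>1/M(t)$; and $\gamma$ meets $C(s^{l-b})$ and $C(s^{l})$ by the previous paragraph, so $\gamma$ satisfies all the hypotheses of Theorem~\ref{main1} for the chosen $t,a$. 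That theorem then yields a continuum $\Gamma\subset\gamma\cap\overline A(s^{l-b},s^{l})$ and $z_{0},z_{0}'\in\Gamma$ with
\[
\Delta\!\arg(f(\Gamma);z_{0},z_{0}')\ge\frac{\pi^{2}t^{a/4}\log M(t)}{48\sqrt2\,\log t}=\frac{\pi^{2}s^{b/4}\log M(s^{l-b})}{48\sqrt2\,\log s^{l-b}};
\]
moreover $f(\Gamma)\subset\overline A(M(s)^{1/l},M(s)^{l-b})$ automatically, since $\Gamma\subset\gamma$. This is exactly case~(3).

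Finally one checks the displayed chain of inequalities in~(3): the middle inequality is equivalent to $M(s^{l-b})\ge M(s)^{l-b}$, which is \eqref{Had} again, and the last inequality follows from $s^{b/4}\ge 48\sqrt2/\pi$ combined with $\log M(s)\ge\log s^{2}=2\log s>0$. The only genuine difficulty in the argument is the observation in the second paragraph that \eqref{min2} forces $f(\gamma)$ through modulus exactly $M(s)$, which is what makes the trichotomy exhaustive; the remainder is bookkeeping with the constants and repeated use of \eqref{Had}.
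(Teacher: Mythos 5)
Your proposal is correct and follows essentially the same route as the paper: the trichotomy is obtained from \eqref{min2} and the connectedness of $\gamma$ and $f(\gamma)$, and case~(3) is deduced from Theorem~\ref{main1} with $t=s^{l-b}$, $a=b/(l-b)$, using \eqref{conditions} to verify \eqref{tanda} and \eqref{Had} for the final inequalities. The only difference is cosmetic packaging (you first show $f(\gamma)$ meets $C(M(s))$ and then split into cases, while the paper treats cases (1) and (2) directly), so there is nothing to add.
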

\begin{proof}
Suppose that $f,s,l,b$ and $\gamma$ satisfy the hypotheses of the theorem. Since~$\gamma$ meets $C(s)$, there exists $z_1 \in \gamma$ such that $|f(z_1)| \leq M(s)$. So, if there exists $z_2 \in \gamma$ such that $|f(z_2)| \geq M(s)^{l-b}$, then case~(1) must hold.

Also, since $l \geq m$ and $s \geq R_0$, it follows from~\eqref{min2} that there exists $z_3 \in \gamma$ such that $|f(z_3)| \geq M(s)$. So, if there exists $z_4 \in \gamma$ such that $|f(z_4)| \leq M(s)^{1/l}$, then case~(2) must hold.

If neither case~(1) nor case~(2) holds, then it follows that
\begin{equation}\label{casec}
1 \leq M(s)^{1/l} < |f(z)| < M(s)^{l-b}\le M(s^{l-b}),\; \mbox{ for } z \in \gamma,
\end{equation}
by~\eqref{Had}, since $s\ge R_1$ and  $l-b\ge 1$.

We can now apply Theorem~\ref{main1} to $\gamma$ with $t=s^{l-b}$ and $a=b/(l-b)$, since $s^l = (s^{l-b})^{1 + b/(l-b)}$. The hypotheses of Theorem~\ref{main1} are satisfied since, in addition to~\eqref{casec}, we have $t=s^{l-b}\ge s\ge R_1$,
\[
t^{a/4}=(s^{(l-b)/4})^{b/(l-b)} = s^{b/4} \ge \frac{48\sqrt 2}{\pi}
\]
and
\[
at^{a/4}=\frac{bs^{b/4}}{l-b}\ge \frac{384\sqrt 2}{\pi}\,,
\]
by~\eqref{conditions}. We deduce, by Theorem~\ref{main1} and~\eqref{casec}, that there exists a continuum $\Gamma \subset \gamma \cap \overline A(s^{l-b},s^l)$ and $z_0, z'_0\in\Gamma$ such that
\[
f(\Gamma) \subset\overline A(M(s)^{1/l}, M(s)^{l-b})
\]
and
\[
\Delta\!\arg(f(\Gamma);z_0,z'_0) \geq  \frac{\pi^2 s^{b/4} \log M(s^{l -
b})}{48\sqrt 2 \log s^{l - b}}\,,
\]
so case~(3) holds; the final two inequalities follow from~\eqref{Had} and~\eqref{conditions}.
\end{proof}
Our third theorem is a more detailed version of Theorem~\ref{main}, which gives more precise information about the size of any {\sconn} Fatou components of~$f$ and the nature of the $I(f)$ spider's web. It is proved using Theorem~\ref{main2}.
\begin{theorem}\label{main3}
Let $f$ be a {\tef} of the form
\begin{equation}\label{fproduct}
f(z)=cz^{p_0}\prod_{n=1}^{\infty}\left(1+\frac{z}{a_n}\right)^{p_n},
\end{equation}
where $c\in\R\setminus\{0\}$, $p_n\in \{0,1,\ldots\}$, for $n\ge 0$, and the sequence $(a_n)$ is positive and strictly increasing. In addition, suppose that there exist $m>1$ and $R_0>0$ such that, for all $r \geq R_0$,
\begin{equation}\label{min3}
 \mbox{ there exists } \rho \in (r,r^m) \mbox{ with } m(\rho) \geq M(r).
\end{equation}
Then there exist $L>0$ and $R>0$ such that $f$ has the following properties.
\begin{itemize}
\item[(a)]
There are no unbounded components of $F(f)$; more precisely, every {\sconn} component of $F(f)$ that meets $\{z:|z|>R^L\}$ lies in an annulus of the form $A(r,r^L)$, $r>0$.
\item[(b)]
The set $I(f)$ is a spider's web, and hence is connected; more precisely, with $\mu(r)=M(r)^{\eps}$, where $\eps=1/L$, the subset of $I(f)$ defined as
\[
Q_{\eps,R}(f)=\{z:|f^n(z)|\ge \mu^n(R),\; n=0,1,\ldots\}
\]
has the following properties:
\begin{itemize}
\item[(i)]
every component of $Q_{\eps,R}(f)^c$ that meets $\{z:|z|>R^L\}$ lies in an annulus of the form $A(r,r^L)$, $r>0$;
\item[(ii)]
$Q_{\eps,R}(f)$ contains a spider's web.
\end{itemize}
\item[(c)]
If, in addition,~$f$ has no {\mconn} Fatou components, then
$J(f)$ and $I(f)\cap J(f)$ are both spiders' webs; more precisely, $Q_{\eps^2,R}(f)\cap J(f)$ contains a spider's web.
\end{itemize}
\end{theorem}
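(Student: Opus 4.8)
The plan is to deduce all three parts from Theorem~\ref{main2}, whose trichotomy --- radial stretching (cases~(1),(2)) or large winding about the origin (case~(3)) --- is designed precisely for this. I would first fix the constants. Choose $b>0$, then $L\ge\max\{m,1+b\}$ large, set $\eps=1/L$ and $\mu(r)=M(r)^\eps$, and finally take $R$ so large that: $R\ge\max\{1,R_0,R_1\}$; the conditions in \eqref{conditions} all hold for every $s\ge R$ with $l=L$ (each holds once $s$ is large, since $s^{b/4}\to\infty$); $M(s)\ge s^{K}$ for all $s\ge R$, where $K$ is a fixed large integer (possible as $f$ is transcendental); and $M(R)^\eps>R$ (so the tower $\mu^n(R)$ increases to $\infty$). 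Throughout I would exploit the two properties of $f$ noted after Theorem~\ref{main}: $f(\overline z)=\overline{f(z)}$, and $\theta\mapsto|f(re^{i\theta})|$ is strictly decreasing on $[0,\pi]$, so $M(r)=|f(r)|$ and $m(r)=|f(-r)|$.

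Call a continuum \emph{long} if it meets both $C(s)$ and $C(s^L)$ for some $s\ge R$. The key assertion, from which (a) and (b)(i) follow by a routine connectedness argument on the range of $\log|z|$ over a component, is that no long continuum lies in a \sconn\ component of $F(f)$ and none lies in a component of $Q_{\eps,R}(f)^c$. To prove this I would take such a long continuum $\gamma$, use $f(\overline z)=\overline{f(z)}$ (and a small topological lemma on symmetric continua, presumably among the Section~4 results) to replace it by a long subcontinuum in $\{\Im z\ge0\}$, and apply Theorem~\ref{main2} with $l=L$. In cases~(1) and~(2), $f(\gamma)$ again joins two circles, but with a spread of $\log$-moduli that, because $M(s)\ge s^K$ with $K$ large, exceeds $L$ times the original spread; and $f(\gamma)$ lies in the next Fatou component along, respectively (unwinding the definition of $Q_{\eps,R}$) in a component of the corresponding set one iterate later. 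Iterating then forces curves of unbounded hyperbolic length inside a fixed hyperbolic domain --- a Fatou component and all its forward images lie in $\C\setminus\{a,b,c\}$ for fixed $a,b,c\in J(f)$ --- which contradicts the contraction property of the hyperbolic metric used in \cite{RS08} and \cite{Z}; in the $Q_{\eps,R}^c$ case the same growth eventually puts $|f^n|$ both below and above $\mu^n(R)$. In case~(3) the image of a subcontinuum $\Gamma$ stays in $\overline A(M(s)^{1/L},M(s)^{L-b})$ while $\Delta\!\arg f(\Gamma)$ is of order $s^{b/4}\log M(s)/\log s\to\infty$; a curve winding so many times round the origin while confined to a Fatou component avoiding $0$ again forces an enormous hyperbolic length (passing to the logarithmic coordinate it lifts to a curve that must travel a large imaginary distance in a fixed $2\pi i$-periodic hyperbolic domain), so contraction is violated once more. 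Together with the fact that \mconn\ Fatou components of entire functions are bounded, this yields (a) and~(b)(i).

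For the spider's web (b)(ii) I would construct nested loops around $0$ lying in $Q_{\eps,R}(f)$. Given a large $t\ge R$, put $\lambda=M(t)$, start at the point $t\in\R_+$ (where $|f|=\lambda$), and follow the component $\gamma_t$ of $\{|f|=\lambda\}\cap\{\Im z\ge0\}$ through $t$: by strict monotonicity of $|f|$ in $\theta$ this arc meets each circle at most once, runs monotonically outwards, and ends on $\R_-$ at the first radius $\rho$ with $m(\rho)=\lambda$, which by \eqref{min3} satisfies $\rho\le t^m$. Thus $\Sigma_t:=\gamma_t\cup\overline{\gamma_t}$ is a closed curve in $\overline A(t,t^m)$ on which $|f|\equiv\lambda$, and it surrounds $0$ because $|f|>\lambda$ on the real segment between its endpoints, which contains $0$. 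Now either $\rho\le t^{1+b/2}$, in which case the round circle $C(\rho)$ already carries $|f|\ge\lambda$, or $\gamma_t$ is long, Theorem~\ref{main2} applies to it (in the level-curve form from the Remark after Theorem~\ref{main1}), and since $f(\gamma_t)\subset C(\lambda)$ one deduces that $f(\Sigma_t)$ winds many times round $0$, so that the bounded complementary component of $f(\Sigma_t)$ containing $0$ contains the whole disc $\{|z|<M(t)\}$. In either case we have a loop around $0$ whose image encircles a disc of radius $M(t)$. Iterating along scales $t_0<t_1<\cdots$ chosen so that each new scale overtakes the previous disc and the levels climb the tower $\mu^n(R)$ --- at each stage using \eqref{min3} to locate a radius where the minimum modulus is large enough --- produces nested closed curves $\tau_n\subset Q_{\eps,R}(f)$ surrounding $0$ with $\min_{\tau_n}|z|\to\infty$; taking $G_n$ to be the Jordan domain bounded by $\tau_n$ gives a spider's web inside $Q_{\eps,R}(f)$. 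The general results of Section~4 then promote this to ``$I(f)$ is a spider's web''. For~(c), if $f$ has no \mconn\ Fatou components, then by (b)(i) every hole of this web sits in an annulus $A(r,r^L)$ and meets $J(f)$, and running the same loop construction one level further down --- the source of the exponent $\eps^2$ --- inside $J(f)$ shows $Q_{\eps^2,R}(f)\cap J(f)$ contains a spider's web, whence $J(f)$ and $I(f)\cap J(f)$ are spiders' webs.

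The hard part, I expect, will be turning the trichotomy of Theorem~\ref{main2} into an honest contradiction in the ``long curve'' case: one must check that the image of a long curve is again long (so the iteration is legitimate) and, more delicately, that the iterated quantity --- a compounding spread of $\log$-moduli, or a winding number --- really does grow without bound inside a \emph{fixed} hyperbolic domain, so that the contraction of the hyperbolic metric is genuinely violated. The second delicate point is the spider's-web construction: because $m(r)$ is not monotone one cannot simply iterate round circles, and it is exactly the winding theorem that furnishes the closed curves around $0$ on which the entire forward orbit stays large.
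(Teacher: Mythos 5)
Your overall strategy (reduce to the upper half-plane by symmetry, feed a long curve into the trichotomy of Theorem~\ref{main2}, and then either contradict hyperbolic contraction via repeated stretching or exploit the winding) is the paper's strategy, but several steps as you describe them would fail. First, the bookkeeping of exponents: if case~(1) holds, the image curve only joins $C(M(s))$ and $C(M(s)^{l-b})$, so at the next stage you cannot apply Theorem~\ref{main2} with the same $l=L$; with a \emph{fixed} $b$ the admissible exponent degrades by $b$ at every step and drops below $\max\{m,1+b\}$ after finitely many iterations, so the induction dies. The paper's device is to take $b=1/(n+1)^2$ at the $n$th step and $L=m+4$, so that the exponents $L_n$ decrease but stay $\ge m+1$ forever. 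Second, your contradiction for the stretching cases is obtained in the \emph{fixed} hyperbolic domain $\C\setminus\{a,b,c\}$, but there the density decays like $1/(|z|\log|z|)$, so a curve joining $C(r)$ and $C(r^{L})$ has hyperbolic length only about $\log L$, bounded independently of $r$: no contradiction arises. One genuinely needs the forward images to lie in \sconn{} Fatou components (Lemma~\ref{connectivity}) and then either the density bound $\ge 1/(2|z|)$ valid in a \sconn{} domain omitting $0$, or, as the paper does, the distortion Lemma~\ref{dist} ($|f^n(z_0)|\le C|f^n(z_1)|$ on compact subsets of an escaping \sconn{} component), against the spread $r_n^{m}\to\infty$. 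Third, in the $Q_{\eps,R}(f)^c$ setting there is no invariant hyperbolic domain and no contraction at all, so your treatment of case~(3) there is empty; the paper handles it with Lemma~\ref{AR-component}: the winding image must cross an unbounded component of $A_{\tilde R/2}(f)$ in the closed upper half-plane, so some point of $\gamma_0$ has its whole orbit above the tower $\mu^n(R)$ and hence lies in $Q_{\eps,R}(f)$, the desired contradiction.

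Your construction for (b)(ii) also has a genuine gap. The loop $\Sigma_t$ on which $|f|\equiv M(t)$ controls only \emph{one} iterate: $f(\Sigma_t)\subset C(M(t))$, and on that circle the minimum modulus of $f$ may be tiny, so nothing forces points of $\Sigma_t$ to satisfy $|f^n(z)|\ge\mu^n(R)$ for all $n$. Chaining such loops so that the entire forward orbit climbs the tower is essentially the $A_R(f)$ spider's web construction, which (as Remark~3 after Theorem~\ref{main}, and \cite{RS11}, point out) \emph{fails} for some functions satisfying the hypotheses; so this route needs a substantial new idea, and you do not supply one. The paper instead deduces (b)(ii) directly from (b)(i): since every component of $Q_{\eps,R}(f)^c$ far out is bounded (lies in some $A(r,r^L)$), one takes $G_n$ to be a large disc together with all complementary components meeting its bounding circle, filled in; then $\partial G_n\subset Q_{\eps,R}(f)$, and these boundaries are linked into a connected set by the unbounded components of $A_R(f)$ (Lemma~\ref{fast}(a)), after which Lemma~\ref{SWs}(a) upgrades this to $I(f)$ being a spider's web. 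Part~(c) is then obtained by enlarging $G_n$ by the (\sconn, hence by part~(a) annulus-confined) Fatou components meeting $\partial G_n$, which is where the exponent $\eps^2$ really comes from; your ``one level further down inside $J(f)$'' is not an argument for this.
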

{\it Remark}\;\;The set $Q_{\eps,R}(f)$ is a subset of the {\it quite fast escaping set} $Q(f)$, introduced in~\cite{RS11b}, where it is shown that for many entire functions $Q(f) = A(f)$. These sets are not equal in general, however, as shown by examples in~\cite{RS11}.

We prove Theorem~\ref{main3} in Section~5 and give some results needed for the proof in Section~4.

\section{Proof of Theorem~\ref{main1}}
\setcounter{equation}{0}
First we state some preliminary results. For a function $u$ subharmonic in the closed disc $\{z:|z|\le r_0\}$, that is, subharmonic in some larger open disc, we define
\begin{equation}\label{Bdef}
B(r)=B(r,u)=\max_{|z|=r}u(z),\quad 0<r\le r_0,
\end{equation}
and
\begin{equation}\label{Tdef}
T(r)=T(r,u)=\frac{1}{2\pi}\int_0^{2\pi}u^+(re^{i\theta})\,d\theta,\quad 0< r\le r_0.
\end{equation}
Basic facts about $B(r)$ and $T(r)$, such as that they are increasing and convex with respect to $\log r$, can be found in \cite[Section 2.7]{HK}, for example. We also need the following consequence of the Poisson formula for a positive harmonic function in a disc; see \cite[Theorem~3.19]{HK}.
\begin{lemma}\label{Poisson}
Suppose that $u$ is subharmonic in $\{z:|z|\le r_0\}$. Then
\[
T(r,u)\le B(r,u)\le \left(\frac{r_0+r}{r_0-r}\right)\,T(r_0,u),\;\;\text{for } 0<r<r_0.
\]
\end{lemma}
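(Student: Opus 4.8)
The plan is to get both inequalities directly out of the Poisson representation, using only the sub-mean value property and the fact that a subharmonic function is dominated by its Poisson modification. The left-hand inequality $T(r,u)\le B(r,u)$ is essentially a triviality: $T(r,u)$ is the mean value of $u^+$ over the circle $C(r)$, hence is at most $\max_{|z|=r}u^+(z)=\max\{B(r,u),0\}$, which is $B(r,u)$ once $B(r,u)\ge 0$; this step needs nothing beyond the definitions, so the substance lies entirely in the right-hand inequality.

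For that I would first record that, since $u$ is subharmonic in a disc strictly larger than $\{|z|\le r_0\}$, its restriction to $\{|z|\le r_0\}$ is dominated by the Poisson modification: if $P[u]$ denotes the harmonic function in $\{|z|<r_0\}$ with boundary values $u$ on $C(r_0)$, then $u-P[u]$ is subharmonic there with boundary values $\le 0$, so the maximum principle gives $u\le P[u]$ in $\{|z|<r_0\}$. Written out, this is
\[
u(z)\le \frac{1}{2\pi}\int_0^{2\pi}\frac{r_0^2-|z|^2}{|r_0e^{i\theta}-z|^2}\,u(r_0e^{i\theta})\,d\theta,\qquad |z|<r_0.
\]
Because the Poisson kernel is nonnegative and $u\le u^+$, I may replace $u(r_0e^{i\theta})$ by $u^+(r_0e^{i\theta})$ on the right, and then bound the kernel by its maximum over $\theta$,
\[
\frac{r_0^2-|z|^2}{|r_0e^{i\theta}-z|^2}\le \frac{r_0^2-|z|^2}{(r_0-|z|)^2}=\frac{r_0+|z|}{r_0-|z|}\le \frac{r_0+r}{r_0-r}\qquad\text{for }|z|=r,
\]
using that $t\mapsto (r_0+t)/(r_0-t)$ is increasing on $[0,r_0)$. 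This yields $u(z)\le \frac{r_0+r}{r_0-r}\,T(r_0,u)$ for every $z$ with $|z|=r$, and taking the maximum over such $z$ gives $B(r,u)\le \frac{r_0+r}{r_0-r}\,T(r_0,u)$.

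The only genuinely delicate point, and the one I would take care over, is the domination $u\le P[u]$ when the boundary trace $u|_{C(r_0)}$ is merely upper semicontinuous and possibly $-\infty$ somewhere: one interprets $P[u]$ as the Poisson integral of that trace (integrable from below because $u\not\equiv -\infty$), or approximates $u|_{C(r_0)}$ from above by continuous functions and passes to the limit; the inequality survives either way. Everything else is a routine estimate of the Poisson kernel, so beyond this standard technicality I expect no real obstacle, which is precisely why the statement can be quoted from the literature.
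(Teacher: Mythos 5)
Your argument is correct and is essentially the proof behind the result the paper simply quotes from Hayman--Kennedy: domination of $u$ by the Poisson integral of its boundary values (hence of $u^+$), followed by the elementary kernel bound $\frac{r_0^2-r^2}{(r_0-r)^2}=\frac{r_0+r}{r_0-r}$, i.e.\ the Harnack estimate for the positive harmonic majorant with value $T(r_0,u)$ at the origin. The only caveat---which you yourself flag---is that $T(r,u)\le B(r,u)$ needs $B(r,u)\ge 0$; in every application in the paper $u=\log^+(|f|/\lambda)\ge 0$, so this is harmless.
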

Next we recall the Milloux-Schmidt inequality \cite[page~289]{wH89}.
\begin{lemma}\label{MS}
Suppose that $u$ is subharmonic in $\{z:|z|\le r_0\}$ and that
\begin{equation}\label{min0}
\min_{|z|=r}u(z)=0, \;\;\text{for } 0<r<r_0.
\end{equation}
Then
\[
B(r,u)\le \frac{4}{\pi}B(r_0,u)\tan^{-1}\left(\frac {r}{r_0}\right)^{1/2},\;\;\text{for } 0<r<r_0.
\]
\end{lemma}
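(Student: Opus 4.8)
The plan is to translate the hypothesis \eqref{min0} into an upper bound for a harmonic measure and then to identify the extremal configuration explicitly as a radially slit disc. By scaling (apply the statement to $w\mapsto u(r_0w)$) we may assume throughout that $r_0=1$, and we may suppose $u$ is continuous. Fix $r\in(0,1)$ and pick $z^*\in C(r)$ with $u(z^*)=B(r,u)$; if $B(r,u)\le 0$ there is nothing to prove, so assume $u(z^*)>0$, and note that then $B(1,u)\ge B(r,u)>0$ since $B(\cdot,u)$ is increasing. Let $\Omega$ be the connected component of $\{z:|z|<1,\ u(z)>0\}$ that contains $z^*$, and set $\beta=\partial\Omega\cap C(1)$. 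On $\partial\Omega\setminus\beta$ we have $u=0$ and on $\beta$ we have $u\le B(1,u)$, so the maximum principle for subharmonic functions gives $u(z)\le B(1,u)\,\omega(z,\beta,\Omega)$ for $z\in\Omega$, where $\omega(\cdot,\beta,\Omega)$ denotes harmonic measure. Hence it suffices to prove that $\omega(z^*,\beta,\Omega)\le\frac4\pi\tan^{-1}\sqrt r$.

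The crucial point, and the place where \eqref{min0} enters, is that for every $\rho\in(0,1)$ there is a point of $C(\rho)$ at which $u\le 0$, and such a point lies outside $\Omega$. Writing $\Omega=\{z:|z|<1\}\setminus E$, the set $E$ therefore meets every circle $C(\rho)$, $0<\rho<1$, so its circular projection $E^*=\{|z|:z\in E\}\cap[0,1)$ onto the positive real segment is the whole of $[0,1)$. After enlarging the target $\beta$ to all of $C(1)$ and replacing $E$ by a compact subset of the open disc (both permissible by monotonicity of harmonic measure, passing to the limit afterwards), Beurling's projection theorem — a form of circular symmetrization; see, e.g., Ahlfors, \emph{Conformal Invariants}, or Hayman, \emph{Multivalent Functions} — gives
\[
\omega(z^*,\beta,\Omega)\ \le\ \omega\bigl(-r,\,C(1),\,\{z:|z|<1\}\setminus[0,1)\bigr),
\]
namely the harmonic measure of the unit circle, evaluated at $-r$, in the radially slit disc.

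It then remains to compute this last quantity, which is a routine conformal calculation. The branch of $z\mapsto\sqrt z$ with argument in $(0,\pi)$ maps $\{z:|z|<1\}\setminus[0,1)$ onto the half-disc $\{w:|w|<1,\ \Im w>0\}$, carrying $C(1)$ to the upper unit semicircle, the two edges of the slit to the diameter $(-1,1)$, and $-r$ to $i\sqrt r$; the map $w\mapsto-\frac12(w+1/w)$ then carries the half-disc onto the upper half-plane, sending the semicircle to $[-1,1]$, the diameter to its complement in $\R$, and $i\sqrt r$ to $iy$ with $y=(1-r)/(2\sqrt r)$. The harmonic measure of the segment $[-1,1]$ at the point $iy$ of the upper half-plane equals $1/\pi$ times the angle that $[-1,1]$ subtends at $iy$, that is, $\frac2\pi\tan^{-1}(1/y)=\frac2\pi\tan^{-1}\!\bigl(2\sqrt r/(1-r)\bigr)$, and the tangent double-angle identity $\tan^{-1}\!\bigl(2t/(1-t^2)\bigr)=2\tan^{-1}t$ with $t=\sqrt r$ converts this to $\frac4\pi\tan^{-1}\sqrt r$. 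Chaining the three steps gives $B(r,u)=u(z^*)\le\frac4\pi B(1,u)\tan^{-1}\sqrt r$, and the case of general $r_0$ follows by undoing the scaling.

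I expect the second step to be the main obstacle. It is there that the geometric content of \eqref{min0} — that $u$ fails to be positive at \emph{some} point of \emph{every} intermediate circle, but possibly in an arbitrary angular position — must be converted into a sharp numerical bound, and this requires the circular symmetrization (Beurling projection) machinery. Both this step and the preliminary reduction to continuous $u$ also call for some measure-theoretic care (upper semicontinuity of $u$, the possible non-compactness of $\{z:|z|\le 1\}\setminus\Omega$, and the limiting argument), which I would handle by the usual exhaustion and approximation devices. The first and third steps are routine: a maximum-principle comparison and an explicit three-map conformal computation.
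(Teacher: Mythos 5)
The paper does not prove Lemma~\ref{MS} at all: it is quoted as the classical Milloux--Schmidt inequality with a reference to \cite[page~289]{wH89}, so there is no in-paper argument to compare against. Your proof is, in substance, the standard proof of that classical result, and it is correct: the two-constants (maximum-principle) reduction to a harmonic-measure estimate, the observation that \eqref{min0} forces the complement of the positivity component to meet every circle $C(\rho)$ so that its circular projection fills $[0,1)$, the appeal to Beurling's projection theorem with the evaluation point projected to the radius \emph{opposite} the slit (this is the one place where it is easy to misstate the theorem, and you have it the right way round), and the three-map conformal computation giving $\omega(-r,C(1),D\setminus[0,1))=\tfrac4\pi\tan^{-1}\sqrt r$, which checks out (it equals $1$ at $r=1$ and behaves like $\tfrac4\pi\sqrt r$ as $r\to0$, as it should). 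The only genuine loose end is the one you flag: for a general upper semicontinuous subharmonic $u$ the set $\{u>0\}$ need not be open and the minimum in \eqref{min0} need not be attained, so the component $\Omega$ and the identity $u=0$ on $\partial\Omega\setminus\beta$ require care. A clean fix is to work, for $\eps>0$, with the relatively closed set $E_\eps=\overline{\{z\in D:u(z)<\eps\}}\cap D$, which meets every circle and on which $u\le\eps$ by upper semicontinuity, apply the two-constants theorem to $u-\eps$ on $D\setminus E_\eps$, and let $\eps\to0$; with that supplied, your argument is a complete proof of the lemma, matching the one in Hayman's book that the paper cites.
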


Finally, we need a simple result in plane topology and include a proof for completeness.
\begin{lemma}\label{continuum}
Let $\gamma$ be a closed connected set in $\C$ that meets the circles $C(r_1)$ and $C(r_2)$, where $0<r_1<r_2$. Then $\gamma$ contains a continuum $\Gamma$ that meets $C(r_1)$ and $C(r_2)$ with $\Gamma\subset \overline A(r_1,r_2)$.
\end{lemma}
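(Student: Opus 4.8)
The plan is to pass to the compact set $X=\gamma\cap\overline A(r_1,r_2)$ and extract from it a connected component that still reaches both bounding circles. Put $A=\gamma\cap C(r_1)$ and $B=\gamma\cap C(r_2)$; these are disjoint (since $r_1<r_2$), nonempty by hypothesis, and closed. As a closed subset of the compact annulus $\overline A(r_1,r_2)$, the set $X$ is compact, so each of its connected components is a (nonempty, compact, connected) continuum contained in $X\subset\gamma$. Hence it suffices to show that some component $\Gamma$ of $X$ meets both $A$ and $B$: such a $\Gamma$ automatically lies in $\overline A(r_1,r_2)$ and meets both $C(r_1)$ and $C(r_2)$, which is exactly what is required.

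To produce such a component I would invoke the standard separation dichotomy for compact metric spaces: if $X$ is compact metric and $A,B\subset X$ are disjoint closed sets, then \emph{either} some component of $X$ meets both $A$ and $B$, \emph{or} there is a partition $X=P\cup Q$ into two disjoint sets, each relatively closed (and hence relatively open) in $X$, with $A\subset P$ and $B\subset Q$. (If one prefers a self-contained argument, this follows from the coincidence of components and quasicomponents in a compact space, a quasicomponent being an intersection of relatively clopen sets.) It then remains to rule out the second alternative, and this is where the hypothesis that $\gamma$ is connected is used: I would show that such a partition of $X$ would force a disconnection of $\gamma$.

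So suppose $X=P\cup Q$ as above. Write $U_1=\gamma\cap\{z:|z|<r_1\}$ and $U_2=\gamma\cap\{z:|z|>r_2\}$, so that $\gamma=U_1\cup X\cup U_2$, and set $P'=P\cup U_1$, $Q'=Q\cup U_2$. These are disjoint (using $P\cap Q=\emptyset$, that $P,Q\subset\overline A(r_1,r_2)$ are disjoint from $U_2$ and $U_1$ respectively, and $U_1\cap U_2=\emptyset$), nonempty (they contain $A$ and $B$), and together cover $\gamma$. The one delicate point is that $P'$ and $Q'$ are closed: since $\gamma$ is closed, the closure of $U_1$ in $\C$ is contained in $\gamma\cap\{z:|z|\le r_1\}=U_1\cup A$, so $\overline{P'}\subset P\cup U_1\cup A=P'$ because $A\subset P$, and symmetrically $\overline{Q'}\subset Q'$. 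Thus $\gamma=P'\cup Q'$ displays $\gamma$ as a union of two disjoint nonempty relatively closed sets, contradicting connectedness. Hence the first alternative of the dichotomy holds, which gives the desired continuum $\Gamma$. I do not anticipate any real obstacle; the only thing requiring care is attaching the unbounded pieces $U_1,U_2$ of $\gamma$ to the correct halves $P,Q$, which is handled by the closure computation just described.
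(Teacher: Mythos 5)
Your proof is correct, but it runs along a somewhat different track from the paper's. The paper extracts the continuum in two successive steps: it takes a component $\gamma_1$ of $\gamma\cap\{z:|z|<r_2\}$ meeting $C(r_1)$, cites Newman's boundary-bumping result (that the closure of such a component must reach $C(r_2)$, since $\gamma$ is closed, connected and meets the complement of the disc), and then repeats the same trick with a component of $\overline{\gamma_1}\cap\{z:|z|>r_1\}$, so that $\Gamma$ arises as the closure of a component of an intersection with an open set. You instead work in one step with the compact set $X=\gamma\cap\overline A(r_1,r_2)$ and invoke the clopen-separation dichotomy (components coincide with quasicomponents in compact spaces), then rule out the separating alternative by reattaching the pieces $U_1,U_2$ of $\gamma$ outside the annulus to $P$ and $Q$ and exhibiting a disconnection of $\gamma$; the closure computation $\overline{U_1}\subset U_1\cup A$, $\overline{U_2}\subset U_2\cup B$, which uses that $\gamma$ is closed, is exactly the delicate point and you handle it correctly. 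The two arguments rest on the same compactness principle, but yours is self-contained modulo the stated dichotomy and produces $\Gamma$ directly as a component of $\gamma\cap\overline A(r_1,r_2)$ meeting both circles, whereas the paper's is shorter on the page because it delegates the topological content to the citation and obtains $\Gamma$ only as a subcontinuum of that intersection. Both are complete proofs of the lemma.
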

\begin{proof}
First let $\gamma_1$ be a component of $\gamma\cap \{z:|z|<r_2\}$ that meets $C(r_1)$. Then $\overline{\gamma_1}$ is a connected subset of $\gamma$ and meets $C(r_2)$ by \cite[page~84]{New}. Next let $\gamma_2$ be a component of $\overline{\gamma_1}\cap \{z:|z|>r_1\}$ that meets $C(r_2)$. Then $\Gamma=\overline{\gamma_2}$ is a connected subset of $\gamma$ that meets both $C(r_1)$ and $C(r_2)$ with $\Gamma\subset \overline A(r_1,r_2)$.
\end{proof}

Now we prove Theorem~\ref{main1}. Let $\gamma$ be a continuum lying in $\{z:\Im z\ge 0\}$ and meeting both $C(t)$ and $C(t^{1+a})$ such that
\begin{equation}\label{fsmall2}
 1/M(t) < |f(z)| < M(t), \;\mbox{ for } z \in \gamma.
\end{equation}
Then Lemma~\ref{continuum} implies that there is a subcontinuum $\Gamma$ of $\gamma$ lying in $\overline{A}(t, t^{1+a})$ and meeting both $C(t)$ and $C(t^{1+a})$. Clearly $\Gamma$ also lies in $\{z:\Im z\ge 0\}$.

Recall that $t$ and $a$ are assumed to satisfy the inequalities in~\eqref{tanda}. Put
\[
r_1=t\;\text{ and }\; r_2=t^{1+a},
\]
and, for $i=1,2$, let $C_i$ denote the largest circular arc with centre~0 of radius~$r_i$ such that $C_i$ meets the positive real axis but does not meet $\Gamma\cup \Gamma^*$, where $*$ denotes reflection in the real axis. The idea of the proof is to use the bounds in \eqref{fsmall2} together with the Milloux-Schmidt inequality to show that $M(r,f)$ must grow by a large amount between $r_1$ and $r_2$, and then use this growth to show that $f(C_2)$ must wind round~$0$ many more times than $f(C_1)$ does, from which it follows that $f(\Gamma)$ must wind many times round~$0$.

For $\lambda>0$, let
\[
u_{\lambda}(z)=\log^+(|f(z)|/\lambda),\quad z\in\C,
\]
and
\[
G_{\lambda}=\{z:|f(z)|>\lambda\}.
\]
Then $u_{\lambda}$ is a non-negative continuous {\shf} in $\C$, symmetric with respect to the real axis, which vanishes on $\partial G_{\lambda}$, and is positive harmonic in $G_{\lambda}$. As in \eqref{Bdef} and \eqref{Tdef}, let
\[
B_{\lambda}(r)=B(r,u_{\lambda}) = \max_{|z|=r} u_{\lambda}(z)=\log^+ (M(r)/\lambda),\quad r>0,
\]
and
\[
T_{\lambda}(r)=T(r,u_{\lambda}) =\frac{1}{2\pi}\int_0^{2\pi}u_{\lambda}(re^{i\theta})\,d\theta,\quad r>0.
\]

Let $M=M(t)$. For each fixed $r>0$, $|f(re^{i\theta})|$ is a strictly decreasing function of $\theta$ for $0\le \theta\le \pi$, by \eqref{prod}. Thus, for $\lambda\in (0,M)$, the set $G_{\lambda}$ meets each circle $C(r)$, $r\ge t$, in a circle or an open circular arc, $C_{\lambda}(r)$ say, that is symmetric with respect to the positive real axis. The next lemma relates the the number of times $f(C_{\lambda}(r))$ winds round~$0$ to the mean $T_{\lambda}$.
\begin{lemma}\label{winding1}
For $\lambda\in (0,M)$ and $r\ge t$, we have
\begin{equation}\label{winding1a}
\Delta\!\arg f(C_{\lambda}(r))=2\pi rT'_{\lambda}(r)>0.
\end{equation}
\end{lemma}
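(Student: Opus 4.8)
The plan is to identify the quantity $\Delta\!\arg f(C_\lambda(r))$ with the change in the argument of $f$ along the circular arc $C_\lambda(r)$, express this as a line integral, and relate it to the radial derivative of $T_\lambda$ by means of the Cauchy--Riemann equations together with the symmetry of $u_\lambda$. The key observation is that $C_\lambda(r)$ is, by the monotonicity of $\theta\mapsto |f(re^{i\theta})|$ established from \eqref{prod}, an arc symmetric about the positive real axis, say $C_\lambda(r)=\{re^{i\theta}:|\theta|<\theta_\lambda(r)\}$ (with $\theta_\lambda(r)=\pi$ when $G_\lambda$ contains the whole circle). On this arc $u_\lambda>0$, so $u_\lambda$ is harmonic there, and locally $u_\lambda=\Re g$ for a branch $g$ of $\log f$; then $\Delta\!\arg f(C_\lambda(r))=\Im g$ evaluated between the two endpoints of the arc, which is the change in the harmonic conjugate of $u_\lambda$ along $C_\lambda(r)$.

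First I would write the harmonic conjugate relation in polar coordinates: if $v_\lambda$ is a local harmonic conjugate of $u_\lambda$, the Cauchy--Riemann equations give $\frac{\partial v_\lambda}{\partial\theta}=r\,\frac{\partial u_\lambda}{\partial r}$ along a circle of radius $r$. Integrating over the arc $C_\lambda(r)$ from $\theta=-\theta_\lambda(r)$ to $\theta=\theta_\lambda(r)$ yields
\[
\Delta\!\arg f(C_\lambda(r))=\int_{-\theta_\lambda(r)}^{\theta_\lambda(r)} r\,\frac{\partial u_\lambda}{\partial r}(re^{i\theta})\,d\theta
= r\,\frac{d}{dr}\int_{-\theta_\lambda(r)}^{\theta_\lambda(r)} u_\lambda(re^{i\theta})\,d\theta,
\]
where the last step uses that $u_\lambda$ vanishes on $\partial G_\lambda$, so the boundary terms coming from differentiating the variable limits $\pm\theta_\lambda(r)$ drop out (Leibniz's rule). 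Now outside the arc $C_\lambda(r)$ on the circle $C(r)$ we have $u_\lambda=0$, and $u_\lambda$ is symmetric about the real axis; hence $\int_{-\theta_\lambda(r)}^{\theta_\lambda(r)} u_\lambda(re^{i\theta})\,d\theta = \int_0^{2\pi} u_\lambda(re^{i\theta})\,d\theta = 2\pi T_\lambda(r)$, since $u_\lambda=u_\lambda^+$ everywhere. Combining, $\Delta\!\arg f(C_\lambda(r))=2\pi r\,T'_\lambda(r)$.

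For the strict positivity $T'_\lambda(r)>0$: $T_\lambda$ is a non-decreasing convex function of $\log r$ (a standard fact about means of subharmonic functions, cited in the excerpt), so $T'_\lambda(r)\ge 0$; it is strictly positive because for $r\ge t$ the arc $C_\lambda(r)$ is non-degenerate (as $\lambda<M=M(t)\le M(r)$ forces $G_\lambda\cap C(r)\neq\emptyset$, while the monotonicity in $\theta$ keeps it a proper symmetric arc or the full circle) and $u_\lambda$ is a non-constant positive harmonic function there, so the integral defining $T_\lambda$ is strictly increasing in $r$; alternatively one sees $T'_\lambda(r)>0$ directly because $\partial u_\lambda/\partial r>0$ somewhere on $C_\lambda(r)$ by the Hopf/boundary-point behaviour at $\partial G_\lambda$. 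The main obstacle I anticipate is the justification of differentiating under the integral sign with moving endpoints and the regularity of $\theta_\lambda(r)$ — one must check that $\theta_\lambda(r)$ varies reasonably (it is monotone in $r$, hence differentiable a.e., and $u_\lambda$ is $C^1$ up to $\partial G_\lambda$ where it vanishes), so the boundary contributions genuinely vanish; handling the degenerate case where $C_\lambda(r)$ is the entire circle $C(r)$ (then $\theta_\lambda(r)=\pi$ is locally constant and there is no boundary term at all) should be treated separately but is easier.
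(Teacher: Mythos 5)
Your argument is essentially the paper's proof: both identify $2\pi r T'_{\lambda}(r)$ with $\int_{C_{\lambda}(r)} \frac{\partial u_{\lambda}}{\partial r}\,ds$ (using that $u_{\lambda}$ vanishes off $G_{\lambda}$ so only the arc contributes), then use the Cauchy--Riemann relation with the harmonic conjugate $v_{\lambda}$, a branch of $\arg f$ near $C_{\lambda}(r)$, to read this integral as $\Delta\!\arg f(C_{\lambda}(r))$, with positivity coming from $T_{\lambda}$ being increasing and convex in $\log r$. The only cosmetic difference is that the paper differentiates the full-circle mean with fixed limits of integration and then restricts to the arc, which sidesteps the moving-endpoint Leibniz issue you discuss.
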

\begin{proof}
Since $C_{\lambda}(r)$ is either a circle or an arc in $G_{\lambda}$, at the endpoints of which $u_{\lambda}$ vanishes, we have
\[
rT'_{\lambda}(r)=\frac{r}{2\pi}\int_0^{2\pi}\frac{\partial u_{\lambda}}{\partial r}\,d\theta=\frac{1}{2\pi}\int_{C_{\lambda}(r)}\frac{\partial u_{\lambda}}{\partial r}\,ds=\frac{1}{2\pi}\int_{C_{\lambda}(r)}\frac{\partial v_{\lambda}}{\partial s}\,ds.
\]
Here $v_{\lambda}$ is a harmonic conjugate of $u_{\lambda}$ obtained by taking a branch of $\arg f$ defined on a neighbourhood of $C_{\lambda}(r)\setminus \{-r\}$. The result now follows since the last integral above gives the net change in the argument of $f(z)$ as $z$ traverses $C_{\lambda}(r)$ and $T(r)$ is an increasing convex function of $\log r$.
\end{proof}

For $i=1,2$, let $\lambda_i=\inf_{z\in C_i}|f(z)|$. Then, by \eqref{fsmall2},
\begin{equation}\label{Ci}
C_i=C_{\lambda_i}(r_i)\;\text{ and }\;\lambda_i\in (1/M,M),\;\; \text{for } i=1,2.
\end{equation}

The next lemma relates the difference between the winding round~0 of $f(C_2)$ and $f(C_1)$ to the growth of $M(r)$ between $r_1$ and $r_2$.
\begin{lemma}\label{wind-growth}
With the above notation we have
\[
\frac{1}{2\pi}\left(\Delta\!\arg f(C_2)-\Delta\!\arg f(C_1)\right)\ge \frac{\tfrac13B_{M}(r_2/2)-2B_{M}(s)-4\log M}{\tfrac12\log(r_2/r_1)}\,,
\]
where $s=\sqrt{r_1r_2}=t^{1+a/2}$.
\end{lemma}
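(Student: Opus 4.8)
The plan is to convert the two winding quantities into radial logarithmic derivatives of the means $T_{\lambda_i}$, use the convexity of these means in $\log r$ to pass to a difference quotient over the interval with endpoints $\log r_1$ and $\log r_2$, and then estimate the resulting numerator from below using Lemma~\ref{Poisson} together with a few elementary inequalities for $\log^+$.

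First, by~\eqref{Ci} and Lemma~\ref{winding1}, for $i=1,2$ we have $\Delta\!\arg f(C_i)=\Delta\!\arg f(C_{\lambda_i}(r_i))=2\pi r_iT'_{\lambda_i}(r_i)$ (here $\lambda_i\in(1/M,M)\subset(0,M)$ and $r_i\ge r_1=t$, as required, and note that $M=M(t)>1$, since otherwise~\eqref{fsmall2} would force $\gamma=\emptyset$), so the left-hand side of the claimed inequality equals $r_2T'_{\lambda_2}(r_2)-r_1T'_{\lambda_1}(r_1)$. Since each $T_\lambda$ is an increasing convex function of $\log r$, the logarithmic derivative $r\mapsto rT'_\lambda(r)$ is non-decreasing; comparing its value at $r_2$ (respectively at $r_1$) with the mean value of $rT'_{\lambda_2}(r)$ over $\log r\in[\log s,\log r_2]$ (respectively of $rT'_{\lambda_1}(r)$ over $\log r\in[\log r_1,\log s]$) --- two intervals each of length $\tfrac12\log(r_2/r_1)$, because $s=\sqrt{r_1r_2}$ --- gives
\[
r_2T'_{\lambda_2}(r_2)-r_1T'_{\lambda_1}(r_1)\ \ge\ \frac{\big(T_{\lambda_2}(r_2)-T_{\lambda_2}(s)\big)-\big(T_{\lambda_1}(s)-T_{\lambda_1}(r_1)\big)}{\tfrac12\log(r_2/r_1)}.
\]

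It then remains to bound the numerator below by $\tfrac13B_M(r_2/2)-2B_M(s)-4\log M$. I would discard the nonnegative term $T_{\lambda_1}(r_1)$; use $\lambda_2<M$ to get $u_{\lambda_2}\ge u_M$ pointwise, hence $T_{\lambda_2}(r_2)\ge T_M(r_2)$; and, since $1/M<\lambda_i<M$ and $M\ge1$, use the elementary bound $u_{\lambda_i}(z)=\log^+(|f(z)|/\lambda_i)\le\log^+(M|f(z)|)\le u_M(z)+2\log M$ to deduce $T_{\lambda_i}(s)\le T_M(s)+2\log M$. This brings the numerator down to at least $T_M(r_2)-2T_M(s)-4\log M$. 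Finally, applying Lemma~\ref{Poisson} to the subharmonic function $u_M$ on~$\C$: with $r_0=r_2$ and $r=r_2/2$ it yields $B_M(r_2/2)\le 3\,T_M(r_2)$, i.e.\ $T_M(r_2)\ge\tfrac13B_M(r_2/2)$, while the left-hand inequality there gives $T_M(s)\le B_M(s)$. Substituting these two estimates completes the argument, since $\tfrac12\log(r_2/r_1)>0$.

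I do not expect a serious obstacle here: the genuine analytic input is already isolated in Lemmas~\ref{winding1} and~\ref{Poisson} and in the convexity of $T_\lambda$ in $\log r$, and all that is left is the $\log^+$ bookkeeping that produces the precise constant $2\log M$ (and hence $4\log M$ after it is used twice) together with the trivial verifications that $r_1<s<r_2$ and $0<r_2/2<r_2$.
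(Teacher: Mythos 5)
Your proof is correct and follows essentially the same route as the paper: Lemma~\ref{winding1} plus convexity of $T_\lambda$ as a function of $\log r$ across the midpoint $s=\sqrt{r_1r_2}$, followed by Lemma~\ref{Poisson} with $r=r_2/2$, $r_0=r_2$ and $T\le B$. The only cosmetic difference is that you pass from $\lambda_i$ to $M$ at the level of $T$ via the pointwise bound $u_{\lambda_i}\le u_M+2\log M$ and apply the Poisson estimate to $u_M$, whereas the paper applies it to $u_{\lambda_2}$ and converts the $B$-terms afterwards; the constants agree.
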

\begin{proof}
For $\lambda\in (0,M)$, let
\begin{equation}\label{phidef}
\phi_{\lambda}(\rho)=T_{\lambda}(e^{\rho}), \quad \rho\ge \log t.
\end{equation}
Then each $\phi_{\lambda}$ is convex and increasing, and
\[
\phi'_{\lambda}(\rho)=rT_{\lambda}'(r),\quad\text{where }r=e^{\rho}\ge t.
\]
Thus \eqref{winding1a} can be written in the form
\begin{equation}\label{winding4}
\Delta\!\arg f(C_{\lambda}(r))=2\pi \phi'_{\lambda}(\rho),\;\;\text{where }\rho=\log r.
\end{equation}
Also put
\[
\rho_1=\log r_1,\;\;\rho_2=\log r_2\;\text{ and }\; \sigma=\log s.
\]

Since each $\phi_{\lambda}$ is convex and increasing and $\sigma=\tfrac12(\rho_1+\rho_2)$, we deduce from \eqref{Ci}, \eqref{phidef} and Lemma~\ref{Poisson} that
\begin{eqnarray*}
\phi'_{\lambda_1}(\rho_1)&\le&\frac{\phi_{\lambda_1}(\sigma)-\phi_{\lambda_1}(\rho_1)}{\tfrac12(\rho_2-\rho_1)}\\
&=& \frac{T_{\lambda_1}(s)-T_{\lambda_1}(r_1)}{\tfrac12\log(r_2/r_1)}\\
&\le& \frac{B_{\lambda_1}(s)}{\tfrac12\log(r_2/r_1)}\,.
\end{eqnarray*}
Similarly,
\begin{eqnarray*}
\phi'_{\lambda_2}(\rho_2)&\ge&\frac{\phi_{\lambda_2}(\rho_2)-\phi_{\lambda_2}(\sigma)}{\tfrac12(\rho_2-\rho_1)}\\
&=& \frac{T_{\lambda_2}(r_2)-T_{\lambda_2}(s)}{\tfrac12\log(r_2/r_1)}\\
&\ge& \frac{\tfrac13 B_{\lambda_2}(r_2/2)-B_{\lambda_2}(s)}{\tfrac12\log(r_2/r_1)}\,.
\end{eqnarray*}
Thus, by~\eqref{Ci} and~\eqref{winding4},
\begin{equation}\label{winding1b}
\frac{1}{2\pi}\left(\Delta\!\arg f(C_2)-\Delta\!\arg f(C_1)\right)\ge \frac{\tfrac13B_{\lambda_2}(r_2/2)-B_{\lambda_2}(s)-B_{\lambda_1}(s)}{\tfrac12\log(r_2/r_1)}\,.
\end{equation}

Now, for $r>t$ and $\lambda\in(0,M]$, we have
\[
B_{\lambda}(r)=\log f(r)-\log \lambda,
\]
so, by \eqref{Ci},
\[
B_{\lambda_2}(r_2/2)\ge B_M(r_2/2)\;\;\text{and}\;\;B_{\lambda_i}(s)\le B_M(s)+2\log M,\;\;\text{for }i=1,2.
\]
Hence, by \eqref{winding1b},
\[
\frac{1}{2\pi}\left(\Delta\!\arg f(C_2)-\Delta\!\arg f(C_1)\right)
\ge \frac{\tfrac13B_{M}(r_2/2)-2B_{M}(s)-4\log M}{\tfrac12\log(r_2/r_1)},
\]
as required.
\end{proof}

Next we estimate the growth of $B_M(r)$ from $s=t^{1+a/2}$ to $r_2/2=t^{1+a}/2$. Recall that
\[
G_M=\{z:|f(z)|>M\}\; \text{ and }\;u_M(z)=\log^+(|f(z)|/M),
\]
so $u_M$ vanishes on $\partial G_M$. By \eqref{fsmall2}, we have $\Gamma\cap \overline{G_M}=\emptyset$, so each circle $C(r)$, $t<r<t^{1+a}$, meets $\partial G_M$. Also, $|f(z)|\le M$, for $|z|\le t$, since $M=M(t)$. Thus
\[
\min_{|z|=r}u_M(z)=0, \quad\text{for } 0\le r < t^{1+a}.
\]
Therefore, by Lemma~\ref{MS} with $r=s=t^{1+a/2}$ and $r_0=r_2/2=t^{1+a}/2$, we have
\begin{equation}\label{Bgrowth1}
B_M(t^{1+a/2})\le \frac{4\sqrt 2}{\pi}\,t^{-a/4}B_M(t^{1+a}/2),
\end{equation}
since $r<r_0$, by the second inequality in \eqref{tanda}. Now
\[
B_M(t^{1+a/2})=\log (M(t^{1+a/2})/M),
\]
and, by~\eqref{Had}, since $t\ge R_1$,
\[
M(t^{1+a/2})\ge M(t)^{1+a/2}=M^{1+a/2}.
\]
Hence
\begin{equation}\label{Bgrowth2}
B_M(t^{1+a}/2)\ge \frac{\pi at^{a/4}\log M}{8\sqrt 2}\,.
\end{equation}

We can now deduce that $f(C_2)$ winds many more times round~$0$ than $f(C_1)$ does. By Lemma~\ref{wind-growth}, \eqref{Bgrowth1}, \eqref{Bgrowth2} and the last two inequalities in \eqref{tanda},
\begin{eqnarray*}
\frac{1}{2\pi}\left(\Delta\!\arg f(C_2)-\Delta\!\arg f(C_1)\right)
&\ge& \frac{\tfrac13B_{M}(r_2/2)-2B_{M}(s)-4\log M}{\tfrac12\log(r_2/r_1)}\\
&\ge& \frac{\tfrac13B_{M}(t^{1+a}/2)(1-(24\sqrt 2/\pi)t^{-a/4})-4\log M}{\tfrac12 a\log t}\\
&\ge& \frac{\pi at^{a/4}\log M/(48\sqrt 2)-4\log M}{\tfrac12 a\log t}\\
&\ge& \frac{\pi t^{a/4}\log M}{48\sqrt 2 \log t}.
\end{eqnarray*}

To deduce that $f(\Gamma)$ winds many times round~$0$, we introduce the function $F(z)=\log f(z)$ which is analytic on $\C\setminus (-\infty,0]$, the branch being chosen so that~$F$ is real on the positive real axis. Let $\overline{C_i}\cap \Gamma=\{z_i\}$, for $i=1,2$. Then~$F$ maps the parts of $C_1$ and $C_2$ in the upper half-plane to curves each with one endpoint on the positive real axis. If the other endpoints are denoted by $w_1=F(z_1)$ and $w_2=F(z_2)$, then, by the symmetry of $f$ in the real axis,
\[
\Im w_2-\Im w_1\ge  \frac{\pi^2 t^{a/4} \log M}{48\sqrt 2 \log t}\,.
\]
Since $F(\Gamma)$ joins $w_1$ to $w_2$ and $M=M(t)$, it follows that
\[
\Delta\!\arg(f(\Gamma);z_1,z_2) \geq  \frac{\pi^2 t^{a/4} \log M(t)}{48\sqrt 2 \log t} \,.
\]
This completes the proof of Theorem~\ref{main1}.

\section{Preliminary results for the proof of Theorem~\ref{main3}}
\setcounter{equation}{0}
To deduce Theorem~\ref{main3} from Theorem~\ref{main2} we need several preliminary results, all of which hold for a general {\tef}. First we need a standard distortion theorem for iterates in escaping Fatou components; see \cite[Lemma~7]{wB93}.
\begin{lemma}\label{dist}
Let $f$ be a {\tef}, let $U \subset I(f)$ be a {\sconn} Fatou component of $f$ and let $K$ be a compact subset of $U$.
There exist $C>1$ and $N \in \N$ such that
\[
|f^n(z_0)| \leq C|f^n(z_1)|, \; \mbox{ for } z_0, z_1 \in K, \; n \geq N.
\]
\end{lemma}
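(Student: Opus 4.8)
The plan is to obtain the estimate from the contraction property of the hyperbolic metric, in the spirit of the classical distortion theorems used elsewhere in the paper. First I would note that, since $U\subseteq I(f)$, we have $f^n\to\infty$ locally uniformly on $U$ (the limit of any subsequence is a constant, and that constant can only be $\infty$). Fix a point $p\in J(f)$ and set $R=2|p|$. As $K$ is compact in $U$, there is $N\in\N$ with $|f^n(z)|>R$ for all $z\in K$ and $n\ge N$. Let $d_0$ denote the diameter of $K$ in the hyperbolic metric of $U$; this is finite and depends only on $K$ and $U$.

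For $n\ge N$ let $U_n$ be the component of $F(f)$ containing $f^n(U)$. The crucial point is that $U_n$ is \sconn; this I would deduce from the hypothesis that $U$ is \sconn, using the standard structural fact that a \mconn\ Fatou component of a \tef\ is a bounded wandering domain, so that the forward orbit of a \sconn\ component remains in \sconn\ components. Since $U_n\subseteq F(f)$ and $p\in J(f)$ we have $p\notin U_n$, hence $\mathrm{dist}(z,\partial U_n)\le|z-p|$ on $U_n$, and by the Koebe $\tfrac14$-theorem the density $\lambda_{U_n}$ of the hyperbolic metric of $U_n$ satisfies $\lambda_{U_n}(z)\ge c_0|z-p|^{-1}$ for an absolute constant $c_0>0$. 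Writing $w_i=f^n(z_i)$ for $z_0,z_1\in K$, the Schwarz--Pick inequality gives $d_{U_n}(w_0,w_1)\le d_U(z_0,z_1)\le d_0$; integrating the density bound along a hyperbolic geodesic of $U_n$ from $w_0$ to $w_1$ and using $|dz|\ge|\,d|z-p|\,|$ yields $\bigl|\log|w_0-p|-\log|w_1-p|\bigr|\le d_0/c_0$. Since $|w_i|>R=2|p|$ forces $\tfrac12|w_i|\le|w_i-p|\le 2|w_i|$, this translates into $|f^n(z_0)|\le C|f^n(z_1)|$ with $C=4e^{d_0/c_0}>1$, which is the assertion.

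The step I expect to require the most care is the reduction to the \sconn\ case, which is precisely what upgrades the hyperbolic contraction to a \emph{multiplicative} bound on the moduli. If one only knew $f^n(U)\subseteq\{|z|>R\}$, the hyperbolic metric there is comparable to $(|z|\log|z|)^{-1}$ near $\infty$, and the same argument would yield merely a power bound $|f^n(z_0)|\le|f^n(z_1)|^{c}$; the Koebe-type estimate $\lambda_{U_n}(z)\gtrsim|z|^{-1}$ is what removes the logarithm, and for that the simple connectivity of $U_n$ is essential. (If $U_n$ happened to be \mconn\ for some large $n$, it would be a \mconn\ wandering domain, and the required distortion bound on compact subsets would then have to be taken instead from the structure theory of such domains.) Everything else—the local uniform escape to $\infty$, the finiteness of $d_0$, and the elementary integral estimate—is routine.
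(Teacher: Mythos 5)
The paper does not actually prove this lemma: it is quoted as a standard distortion theorem with a citation to Lemma~7 of \cite{wB93}, so there is no in-paper argument to match. Your proposal is, in substance, the standard proof of that cited result, and it is correct: $f^n\to\infty$ locally uniformly on $U$ because $U\subset I(f)$; the Schwarz--Pick inequality transports the finite hyperbolic diameter $d_0$ of $K$ in $U$ to the image components $U_n$; and since $U_n$ is {\sconn} and omits the Julia point $p$, the Koebe-type bound $\lambda_{U_n}(w)\ge c_0|w-p|^{-1}$ turns bounded hyperbolic distance into a bounded ratio of $|w_i-p|$, hence of $|f^n(z_i)|$ once $|f^n|>2|p|$ on $K$; your constant $C=4e^{d_0/c_0}$ checks out. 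The only thin spot is your justification that each $U_n$ is {\sconn}: the fact that a {\mconn} Fatou component is a bounded wandering domain does not by itself imply that forward images of a {\sconn} component remain {\sconn}. The standard argument --- which the paper records separately as Lemma~\ref{connectivity} --- is that if some $U_n$ were {\mconn}, then by Baker's theorem \emph{all} Fatou components would be bounded, so $U$ itself is bounded, $f^n:U\to U_n$ is proper, and the Riemann--Hurwitz formula gives a contradiction. Since that statement is true and standard, this is a citation-level repair rather than a genuine gap; with it your proof is complete, and your closing remark correctly identifies that simple connectivity of the image components is exactly what upgrades the power-type bound to the multiplicative one claimed in the lemma.
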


The next lemma is well known but we include a proof for completeness.
\begin{lemma}\label{connectivity}
Let $f$ be a {\tef}, let $U$ be a {\sconn} Fatou component of $f$, and let $U_n$, $n\in\N$, be the Fatou component of $f$ that contains $f^n(U)$. Then $U_n$ is {\sconn}.
\end{lemma}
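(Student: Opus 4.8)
The plan is to reduce at once, by induction on $n$, to the following assertion: if $U$ is a \sconn{} Fatou component of a \tef{} $f$ and $V$ denotes the Fatou component containing $f(U)$, then $V$ is \sconn. Suppose not. Then $\widehat{\C}\setminus V$ is disconnected, so there is a Jordan curve $\gamma\subset V$ whose bounded complementary component $D$ contains a bounded complementary component $B$ of $V$. Since $B$ is a component of the closed set $\widehat{\C}\setminus V$, its frontier satisfies $\partial B\subset\partial V\subset J(f)$, so we may fix $z_0\in\partial B\cap J(f)$; thus $z_0\in D\setminus V$, and, because $f(U)\subset V$, the entire function $f$ omits the value $z_0$ on $U$. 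By complete invariance of $F(f)$ the same holds for every $f^n$, $n\ge 1$, since $f^n(U)$ lies in a Fatou component. This omission property, together with the simple connectivity of $U$, is the source of the contradiction.

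I would first dispose of the case $\sup_n\max_\gamma|f^n|<\infty$: since each $f^n$ is entire, hence analytic on $\overline D$, the maximum principle shows that $(f^n)$ is uniformly bounded on $D$, so $D\subset F(f)$; being connected and meeting $V$ (along a neighbourhood of $\gamma$), $D$ then lies in $V$, contradicting $z_0\in D\setminus V$. Hence $\sup_n\max_\gamma|f^n|=\infty$; passing to a subsequence, $f^n\to\infty$ locally uniformly on $V$. (Alternatively, one may skip this step: a \mconn{} Fatou component of a \tef{} is automatically a wandering domain contained in $I(f)$, so $V$ is of this type in any case.)

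For the remaining case I would invoke the structure theory of \mconn{} Fatou components of \tef s due to Baker (see~\cite{wB93}): $f^n\to\infty$ uniformly on $V$, and, for all large $n$, the Fatou component $V_n$ containing $f^n(V)$ contains a round annulus that separates $0$ and $\overline{V_{n-1}}$ from $\infty$. Since $f(U)\subset V=V_0$ we have $f^n(U)\subset V_{n-1}$ for all $n$, so for large $n$ the map $f^n$ carries the whole of $U$ into the bounded region cut off by this annulus, while $f^n$ omits on $U$ every point of $J(f)$ lying in the unbounded part; combining this constraint with the simple connectivity of $U$ — in effect, pulling the surrounding annuli of the $V_n$ back through $f^n$ to exhibit a Jordan curve in $U$ that encloses a point of $J(f)$ — gives the required contradiction. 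I expect this transfer step to be the genuine obstacle, since the corresponding purely function-theoretic statement is false (for instance $\exp$ maps a strip onto an annulus), so one must use the dynamical structure of $V$ rather than merely the analytic map $f\colon U\to V$.
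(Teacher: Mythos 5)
Your preliminary reductions are sound: the reduction to a single iterate, the choice of a Jordan curve $\gamma\subset V$ whose inside contains a point $z_0\in\partial B\subset J(f)$, and the maximum-principle argument showing the iterates cannot stay bounded on $\gamma$ (which re-derives part of Baker's theorem that a {\mconn} component is a wandering domain in $I(f)$) are all correct. But the proof is not complete: the step you yourself flag as ``the genuine obstacle'' is exactly the missing content, and the mechanism you sketch for it does not work as stated. The separating annulus you invoke lies in $V_n$, whereas $f^n(U)\subset V_{n-1}$, so $f^n(U)$ never meets that annulus and there is nothing to ``pull back'' through $f^n|_U$; more fundamentally, the preimage of a closed curve under a non-proper analytic map need not contain a closed curve with the relevant winding --- this is precisely the $\exp$/strip-onto-annulus obstruction you mention, and nothing in your argument overcomes it. Likewise, knowing that $f^n$ omits certain Julia points on $U$ gives no contradiction by itself: an analytic function on a {\sconn} domain can perfectly well omit every point outside a multiply connected image, as $\exp$ shows.

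What closes the gap --- and is the paper's proof --- is a different consequence of Baker's theorem \cite[Theorem~3.1]{iB84}: once $f$ has one {\mconn} Fatou component, \emph{every} Fatou component is bounded, because the forward images of the {\mconn} component contain curves surrounding the origin of arbitrarily large modulus lying in pairwise distinct components, and an unbounded component would have to meet infinitely many of them. In particular $U$ is bounded, so $f^n(\partial U)\subset J(f)$ and $f^n:U\to U_n$ is a proper map, and the Riemann--Hurwitz formula \cite[Section~1.2]{nS93} shows that a proper analytic image of a {\sconn} domain cannot be {\mconn}, which is the desired contradiction. So the two ingredients your proposal lacks are the boundedness of $U$ and the resulting properness of $f^n|_U$; these replace your curve-pulling step, and without them there is no way in your framework to transfer the topology of $V_{n-1}$ back to $U$.
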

\begin{proof}
 Suppose that $U_n$ is {\mconn}. Then it follows from a theorem of Baker~\cite[Theorem~3.1]{iB84} that $U$ is bounded. Thus $f^n:U\to U_n$ is a proper map, so we obtain a contradiction by the Riemann-Hurwitz formula \cite[Section~1.2]{nS93}.
\end{proof}

Next we collect together several key properties of the fast escaping set
\[A(f) = \bigcup_{n \in \N} f^{-n}(A_R(f)),\]
where
\[A_R(f)=\{z:|f^n(z)|\ge M^n(R),\;\text{for }n\in\N\},\]
and $R>0$ is such that $M(r) > r$ for $r\geq R$. The following properties can all be found in \cite{RS09a}.
\begin{lemma}\label{fast}
Let $f$ be a {\tef}. Then
\begin{itemize}
\item[(a)]
every component of $A_R(f)$ and $A(f)$ is unbounded;
\item[(b)]
$J(f)=\partial A(f)=\overline{A(f)\cap J(f)}$;
\item[(c)]
if, in addition, $f$ has no {\mconn} Fatou components, then every component of $A_R(f)\cap J(f)$ and $A(f)\cap J(f)$ is unbounded.
\end{itemize}
\end{lemma}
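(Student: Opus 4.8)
The plan is to observe that all three parts are already established, for an arbitrary \tef, in~\cite{RS09a}, so the natural proof is to quote the relevant results there: part~(a) is the theorem of~\cite{RS09a} that every component of $A_R(f)$ and of $A(f)$ is unbounded, part~(b) is the identification there of $J(f)$ with $\partial A(f)$ and with $\overline{A(f)\cap J(f)}$, and part~(c) is the refinement of~(a) valid when $f$ has no \mconn{} Fatou components. For orientation I would also recall the shape of each argument, as follows.

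For part~(a): the set $A_R(f)=\bigcap_{n\ge0}\{z:|f^n(z)|\ge M^n(R)\}$ is closed, being an intersection of closed sets, and the sets $f^{-n}(A_R(f))$ increase with $n$, so $A(f)$ is an increasing union of closed sets. If some component $X$ of $A_R(f)$ were bounded then, since a bounded component of a closed plane set can be surrounded, there would be a bounded open set $V$ with $X\subset V$ and $\partial V\cap A_R(f)=\emptyset$; applying the maximum principle to each iterate $f^n$ on $\overline V$, and exploiting the growth of $M^n(R)$ together with the boundedness of $|f|$ on $\overline V$, one would force a point of $A_R(f)$ onto $\partial V$, a contradiction. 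The corresponding statement for $A(f)$ is then deduced in~\cite{RS09a} from the case of $A_R(f)$ by a pull-back argument.

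For part~(b): since $A(f)\subset I(f)$ and the interior of $A(f)$ lies in $F(f)$ (on any open set where the iterates escape at a common fast rate the family $(f^n)$ is normal), no point of $F(f)$ can lie on $\partial A(f)$, so $\partial A(f)\subset J(f)$; conversely $J(f)\subset\overline{A(f)}$ by the blow-up property of the Julia set, since a suitable iterate of any neighbourhood $N$ of a point of $J(f)$ covers all but at most one point of $\C$, hence meets $A_R(f)$, so some point of $N$ lies in $f^{-n}(A_R(f))\subset A(f)$. Finally, once one knows that $A(f)\cap J(f)\ne\emptyset$, the complete invariance of $A(f)$ together with the density of backward orbits in $J(f)$ gives $J(f)=\overline{A(f)\cap J(f)}$.

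For part~(c): the extra hypothesis is used to rule out the main obstruction that could produce a bounded component of $A_R(f)\cap J(f)$ or of $A(f)\cap J(f)$, namely the boundary of a bounded \mconn{} Fatou component, which by Baker's theorem is bounded and whose boundary lies in $A(f)\cap J(f)$; granted this, the unbounded component of $A_R(f)$ furnished by~(a) still yields an unbounded component after intersection with $J(f)$. I expect that, were one to write everything out rather than cite it, the genuine obstacle would be the maximum-principle propagation step in~(a) --- making precise how the fast-escape inequality on $X$ is transferred to $\partial V$ --- whereas parts~(b) and~(c) would then follow comparatively routinely, using in addition the structure theory of~\cite{iB84} and the \RH.
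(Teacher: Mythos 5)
Your proposal takes essentially the same approach as the paper: the paper's entire proof of this lemma is the single remark that all three properties can be found in \cite{RS09a}, and your main move is precisely that citation. (Your orientation sketch of part~(a) is not quite the argument of \cite{RS09a}, which gets unboundedness from nested preimages of unbounded closed connected sets rather than a maximum-principle propagation to $\partial V$ --- as sketched, the boundary points where $|f^n|\ge M^n(R)$ vary with $n$, so no single point of $\partial V$ is produced --- but since the proof rests on the citation this does not affect the proposal.)
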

The next property of the fast escaping set is not stated explicitly in \cite{RS09a} but it follows easily from results given there.
\begin{lemma}\label{AR-component}
Let $f$ be a {\tef}. There exists $R_2=R_2(f)>0$ such that if $R\ge R_2$, then there is a component of $A_{R/2}(f)$ that meets $\{z:|z|<R\}$ and is unbounded.
\end{lemma}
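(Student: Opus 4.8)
The plan is to obtain the lemma from the basic structure theory of the fast escaping set developed in \cite{RS09a}, together with Lemma~\ref{fast}(a). First I would fix the threshold, choosing $R_2$ so large that $M(r)>r$ for every $r\ge R_2/2$ (enlarging $R_2$ further if some result of \cite{RS09a} requires the base point to be large for another reason). Then, for any $R\ge R_2$, the value $\rho=R/2$ is an admissible base point for the fast escaping set, so $A_{R/2}(f)=A_{\rho}(f)$ is defined and all the results of \cite{RS09a} apply to it. The factor $1/2$ here is essential, not cosmetic: since $A_R(f)\subset\{z:|z|\ge R\}$, the set $A_R(f)$ itself can never meet $\{z:|z|<R\}$, so one genuinely has to pass to a strictly smaller base point.

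The key input I would then quote from \cite{RS09a} is that, for an admissible base point $\rho$, the set $A_{\rho}(f)$ is not only non-empty but contains a point $\zeta$ of modulus less than $2\rho$; taking $\rho=R/2$ this gives $\zeta\in A_{R/2}(f)$ with $|\zeta|<R$. Let $C$ be the component of $A_{R/2}(f)$ containing $\zeta$. By Lemma~\ref{fast}(a), applied with $R/2$ in place of $R$ (legitimate by the choice of $R_2$), the component $C$ is unbounded; since $\zeta\in C$ and $|\zeta|<R$, it is an unbounded component of $A_{R/2}(f)$ meeting $\{z:|z|<R\}$, as required.

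The only step here that is not pure bookkeeping is identifying the precise quantitative statement in \cite{RS09a} --- namely that $A_{\rho}(f)$ reaches down to modulus comparable to $\rho$, say that it has a point of modulus at most $c\rho$ for some fixed $c>1$. Everything else is formal, and if the constant that emerges is some $c$ other than $2$ one simply works with $A_{R/c}(f)$ in place of $A_{R/2}(f)$ throughout; the normalisation $c=2$ is evidently the one that is convenient for the application of this lemma in Section~5.
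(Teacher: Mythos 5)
Your proposal is correct and follows essentially the same route as the paper: the paper quotes \cite[Theorem~2.4]{RS09a} to produce a point $z'$ with $R/2<|z'|<R$ satisfying $|f^n(z')|\ge M^n(R/2)$ for all $n$ (so $z'\in A_{R/2}(f)$), and then applies Lemma~\ref{fast}(a) to conclude that its component is unbounded and meets $\{z:|z|<R\}$. The quantitative input you anticipated from \cite{RS09a} is exactly this statement, with the constant $2$ as you guessed.
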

\begin{proof}
By \cite[Theorem~2.4]{RS09a}, if $R>0$ is sufficiently large, then there exists $z'$ with $R/2<|z'|<R$ such that
\[
|f^n(z')|\ge M^n(R/2),\;\;\text{for }n\in \N.
\]
Thus $z'\in A_{R/2}(f)$, and so, by Lemma~\ref{fast} part~(a), $z'$ lies in an unbounded component of $A_{R/2}(f)$ that must meet $\{z:|z|<R\}$.
\end{proof}
Finally we state sufficient conditions for various sets to be spiders' webs.
\begin{lemma}\label{SWs}
Let $f$ be a {\tef}.
\begin{itemize}
\item[(a)]
If $I(f)$ contains a spider's web, then $I(f)$ is a spider's web.
\item[(b)]
If $J(f)$ contains a spider's web, then $J(f)$ is a spider's web.
\item[(c)]
If $I(f)\cap J(f)$ contains a spider's web, then $I(f)\cap J(f)$ is a spider's web.
\end{itemize}
\end{lemma}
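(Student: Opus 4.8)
The three parts have the same structure, so I would argue uniformly; write $E$ for the set in question, so $E=I(f)$ in (a), $E=J(f)$ in (b) and $E=I(f)\cap J(f)$ in (c). Let $V\subset E$ be the spider's web it contains, with associated (bounded, simply connected) domains $G_n$ satisfying $G_n\subset G_{n+1}$, $\bigcup_n G_n=\C$ and $\partial G_n\subset V$. Since the $G_n$ are bounded and exhaust $\C$, the set $V\supseteq\bigcup_n\partial G_n$ is unbounded, and each $\partial G_n$ is connected, being the boundary of a simply connected domain. Because $\partial G_n\subset V\subset E$, the same domains $G_n$ exhibit the spider's web structure for $E$ as well; so, by the definition of a spider's web, the entire content of the lemma is to prove that $E$ is \emph{connected}.

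To that end let $E_0$ be the component of $E$ containing the connected set $V$, so $\partial G_n\subset V\subset E_0$ for every $n$ and $E_0$ is itself an (unbounded) spider's web. I first observe that every \emph{unbounded} component $C$ of $E$ coincides with $E_0$: such a $C$ meets $G_n$ for all large $n$ but, being unbounded, is not contained in the bounded set $G_n$, so it meets $\partial G_n\subset E_0$, whence $C=E_0$. Thus $E$ has a unique unbounded component, and everything comes down to ruling out \emph{bounded} components of $E$.

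This is the crux, and it is here that the dynamics enters. Suppose $C$ is a bounded component of $E$. Then $\overline C$ is compact, so $\overline C\subset G_N$ for some $N$; in particular $\partial G_N\subset E_0$ surrounds $\overline C$. For part~(a) I would invoke Eremenko's theorem that every component of $\overline{I(f)}$ is unbounded: $\overline C$ lies in some unbounded component $\widehat{C}$ of $\overline{I(f)}$, and $\widehat{C}$, being unbounded and meeting $G_N$, meets $\partial G_N\subset I(f)$; moreover $\overline C\cap I(f)=C$ (a point of $I(f)$ in $\overline C$ would yield a connected subset of $I(f)$ strictly larger than the component $C$), so $\overline C\setminus C\subset\partial I(f)=J(f)$. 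Using in addition that $J(f)=\overline{A(f)\cap J(f)}\subset\overline{I(f)}$ (Lemma~\ref{fast}(b), since $A(f)\subset I(f)$) and that every component of $A(f)$ is unbounded (Lemma~\ref{fast}(a)), hence equals $E_0$ by the previous paragraph, one analyses how the escaping part $\widehat{C}\cap I(f)$ and the Julia part $\widehat{C}\cap J(f)$ of $\widehat{C}$ are linked and deduces that $C$ must after all lie in $E_0$, a contradiction. Parts~(b) and~(c) are handled in the same spirit, using instead that every component of $A_R(f)\cap J(f)$, and hence of $A(f)\cap J(f)$, is unbounded (Lemma~\ref{fast}(c)) together with the density $J(f)=\overline{A(f)\cap J(f)}$; a bounded component of $J(f)$ or of $I(f)\cap J(f)$ would be walled off from the unbounded core $E_0$ in a way these statements forbid.

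The genuine obstacle is exactly this last passage --- from ``$\overline C$ lies in an unbounded component of $\overline E$'' to ``$C$ lies in the unbounded component of $E$ itself''. Since $I(f)$ and $I(f)\cap J(f)$ are not closed, connectedness does not transfer from the closure for free; it is the separating sets $\partial G_n$ furnished by $V$, together with the invariance and density properties of the fast escaping set recorded in Lemma~\ref{fast}, that force the conclusion. In a clean write-up I would isolate this as a purely topological statement about $I(f)$, $J(f)$ and $I(f)\cap J(f)$ and quote the corresponding results of~\cite{RS09a}, where the implications ``$X$ contains a spider's web $\Rightarrow$ $X$ is a spider's web'' are established for precisely these sets.
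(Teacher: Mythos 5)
Your reduction of the lemma to connectedness of $E$, and your observation that every \emph{unbounded} component of $E$ must meet some $\partial G_n$ and hence merge with the component $E_0$ containing the web, are both correct and also implicit in the paper. But the heart of the lemma --- showing that $E$ has no \emph{other} components --- is exactly the step you leave unproved. For (a) you pass from Eremenko's theorem on components of $\overline{I(f)}$ to the assertion that ``one analyses how the escaping part and the Julia part are linked and deduces that $C$ must after all lie in $E_0$''; no such analysis is supplied, and it cannot be routine, since excluding bounded components of $I(f)$ in general is Eremenko's conjecture itself, so any argument must genuinely use the web. Deferring to \cite{RS09a} does not close the gap either: according to the present paper only part (a) is available there (and in \cite[Theorem~4.2]{RS11a}); parts (b) and (c) are proved in this paper, so they cannot simply be quoted.

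The mechanism the paper uses, and which is missing from your sketch, never examines bounded components at all. For (c): a spider's web inside $J(f)$ forces $f$ to have no multiply connected Fatou components (their iterates contain large annuli in $F(f)$ surrounding the origin, which the unbounded connected web in $J(f)$ would have to cross), so by Lemma~\ref{fast}(c) every component of $A(f)\cap J(f)$ is unbounded and hence meets the connected curves $\partial G_n$; therefore the web can be enlarged, staying connected, to contain $A(f)\cap J(f)$, which consequently lies in a single component $I_0$ of $I(f)\cap J(f)$. Now the density statement $J(f)=\overline{A(f)\cap J(f)}$ of Lemma~\ref{fast}(b) gives $I_0\subset I(f)\cap J(f)\subset\overline{A(f)\cap J(f)}\subset\overline{I_0}$, and a set squeezed between a connected set and its closure is connected, so every would-be bounded component is absorbed automatically; part (a) is analogous with $A(f)$ in place of $A(f)\cap J(f)$. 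Part (b) is different again: no multiply connected Fatou components (as above), hence by Kisaka's theorem \cite{mK98} every component of $J(f)$ is unbounded, hence each meets the $\partial G_n\subset J(f)$ and $J(f)$ is connected. Note finally that when you invoke Lemma~\ref{fast}(c) you must first establish its hypothesis (no multiply connected Fatou components) from the existence of the web --- a point your sketch passes over.
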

\begin{proof}
The result in part~(a) was pointed out in \cite[Remark~1 after the proof of Theorem~1.4]{RS09a} and can also be deduced from \cite[Theorem~4.2]{RS11a}. Its proof is similar to that of part~(c) given below.

To prove part~(b) we need to show that $J(f)$ is connected. Since $J(f)$ contains a spider's web, $f$ has no multiply connected Fatou components. Therefore, all the components of $J(f)$ are unbounded (see \cite{mK98}), and hence $J(f)$ is connected.

To prove part~(c), we need to show that $I(f)\cap J(f)$ is connected. Suppose that~$E$ is a spider's web contained in $I(f)\cap J(f)$. Since $f$ can have no {\mconn} Fatou components, every component of $A(f)\cap J(f)$ is unbounded by Lemma~\ref{fast} part~(c). Hence, we can assume that $E$ contains $A(f)\cap J(f)$.

Now $E$ is contained in a single component, $I_0$ say, of $I(f)\cap J(f)$. Since $J(f)=\overline{A(f)\cap J(f)}$, by Lemma~\ref{fast} part~(b), we deduce that
\[I(f)\cap J(f)\subset \overline{A(f)\cap J(f)}\subset \overline{I_0}.\]
Thus $I(f)\cap J(f)=I_0$ and hence $I(f)\cap J(f)$ is connected, as required.
\end{proof}

\section{Proof of Theorem~\ref{main3}}
\setcounter{equation}{0}
The idea of the proof of part~(a) is as follows: we start with a curve that lies in a {\sconn} Fatou component of $f$ and we assume that the curve meets two circles of the form $C(r)$ and $C(r^L)$, for some sufficiently large values of~$r$ and~$L$. We then obtain a contradiction by showing that the forward images of this curve must either experience repeated radial stretching, and so contradict Lemma~\ref{dist}, or else some forward image of this curve must wind round~$0$ and hence meet $J(f)$. A similar argument can be used to prove part~(b)(i), so the proof of this part is done at the same time as the proof of  part~(a).

A key fact needed to obtain these contradictions is that the function $f$ has the symmetry property noted earlier
\[
f(\overline z)=\overline{f(z)},\;\;\text{for } z\in\C.
\]
This property implies that $F(f)$ is symmetric with respect to the real axis, as are the sets $A_R(f)$, where $R>0$ is such that $M(r) > r$ for $r\geq R$.

Let $m>1$ be the constant given in~\eqref{min3}, and put
\begin{equation}\label{Lsize}
L = m + 4,\;\;\eps=1/L\;\;\text{and}\;\; \mu(r)=M(r)^{1/L},\;\;r>0.
\end{equation}
Now let $R\geq \max \{1, R_0, R_1, R_2\}$, where $R_0$ is in the statement of the theorem, $R_1$ is the constant in \eqref{Had}, and $R_2$ is the constant in Lemma~\ref{AR-component}, and also let $R$ be so large that
\begin{equation}\label{Rsize}
R^{1/4} \ge \frac{384\sqrt 2\, L}{\pi}\,, \;\; J(f)\cap \{z:|z|<R\}\neq \emptyset\;  \mbox{ and } \; M(r) > r^{4L^2} \mbox{ for } r\geq R.
\end{equation}

We now start the proof of parts~(a) and (b)(i). Suppose that $U$ is either a {\sconn} Fatou component of $f$ (in part~(a)) or a component of the open set $Q_{\eps,R}(f)^c$ (in part~(b)(i)), and that for some  $r_0>R$ there exists a curve $\gamma_0$ in $U$ that meets both $C(r_0)$ and $C(r_0^L)$. To prove parts~(a) and~(b)(i), we have to obtain a contradiction in each case. As described above, we obtain a contradiction in one of two ways.

First we define the sequence $(L_n)$ by
\begin{equation}\label{Ln}
L_0=L-1 \;\;\text{and}\;\;L_{n+1} = L_n - 1/(n+1)^2,\;\;n\ge 0.
\end{equation}
Suppose that we can find curves $\gamma_n\subset f^n(\gamma_0)$ and a sequence $(r_n)$ such that, for $n=1,2,\ldots$, we have
\begin{equation}\label{eq3.1}
r_{n}\ge \mu(r_{n-1}),
\end{equation}
\begin{equation}\label{contains}
f(\gamma_{n-1})\supset \gamma_{n},
\end{equation}
\begin{equation}\label{eq3.2}
\gamma_n\subset \overline A(r_n,r_n^{L_n}),\;\text{ and }\gamma_n\text{ meets both }C(r_n) \text{ and } C(r_n^{L_n}).
\end{equation}
(These three conditions formalise what we mean by saying that the images of the curve $\gamma_0$ experience repeated radial stretching.)

We deduce, by \eqref{contains}, that there is a point $z\in \gamma_0$ such that, for $n\ge 0$,
\[
f^n(z)\in \gamma_n,\quad \text{so}\quad |f^n(z)|\ge r_n \geq \mu^n(r_0)\ge \mu^n(R),
\]
by \eqref{eq3.1} and \eqref{eq3.2}. Hence $z\in Q_{\eps,R}(f)$, which gives a contradiction in part~(b)(i).

In part~(a), we deduce that $U\subset I(f)$, so the fact that
\[
\frac{\sup \{|f^n(z)|:z\in\gamma_0\}}{\inf \{|f^n(z)|:z\in\gamma_0\}}\ge \frac{r_n^{L_n}}{r_n}\ge \frac{r_n^{L-3}}{r_n}=r_n^{m}\to\infty\;\;\text{as }n\to\infty,
\]
gives a contradiction to Lemma~\ref{dist}.

Suppose then that, for $k=0,1,\ldots, n$, curves $\gamma_k$ and positive numbers $r_k$ have been chosen such that~\eqref{eq3.1}, \eqref{contains} and~\eqref{eq3.2} are satisfied with $n$ replaced by $k$, for $k=1,\ldots, n$. To complete the proof we show that we can choose a curve $\gamma_{n+1}$ and a positive number $r_{n+1}$ so that~\eqref{eq3.1}, \eqref{contains} and~\eqref{eq3.2} hold with $n$ replaced by $n+1$.

To do this we apply Theorem~\ref{main2}, with
\[
s=r_n,\;\; l=L_n\;\;\text{and}\;\; b=1/(n+1)^2,\;\text{ where }n\ge 0,
\]
to a curve $\gamma'_n$ meeting $C(r_n)$ and $C(r_n^{L_n})$, chosen such that $\gamma'_n \subset \{z: \Im z \geq 0\}$ and
\begin{equation}\label{gamma}
\gamma'_n \subset \gamma_n \cup \gamma_n^*,
 \end{equation}
where $*$ denotes reflection in the real axis. Note that $l-b=L_{n+1}$.

We check that the hypotheses of Theorem~\ref{main2} are satisfied. By \eqref{Lsize}, \eqref{Rsize}, \eqref{Ln}, \eqref{eq3.1} and the choice of~$R$,
\[
L_n\ge L-3=m+1\ge 1+\frac{1}{(n+1)^2},
\]
\[
r_n \geq \mu^n(r_0) > \mu^n(R) > R \geq R_0,
\]
and
\[
M(r_n) \ge r_n^{4L^2}> r_n^2.
\]
Note that $\mu(r)=M(r)^{1/L} \geq r^{4L}$, for $r\ge R$, by~\eqref{Rsize}. This estimate also implies that $r_n \geq r_0^{(4L)^n}$, for $n\ge 0$, by~\eqref{eq3.1}. Hence, since $L\ge 4$, $b\le 1$ and $l-b\ge 1$, we have
\begin{eqnarray*}
s^{b/4}
&\ge& \frac{bs^{b/4}}{l-b}
= \frac{r_n^{1/(4(n+1)^2)}}{(n+1)^2 L_{n+1}}\\
&\ge& \frac{r_0^{(4L)^n/(4(n+1)^2)}}{(n+1)^2 L}
\ge \frac{r_0^{4^{n-1}}}{(n+1)^2 L}\\
&\ge& \frac{r_0^{1/4}}{L}\ge \frac{384\sqrt 2}{\pi},
\end{eqnarray*}
by \eqref{Rsize} again, as required. Thus the hypotheses of Theorem~\ref{main2} are satisfied.

If case~(1) or case~(2) of Theorem~\ref{main2} holds for $\gamma'_n$, then the same case holds for~$\gamma_n$, by the symmetry of $f$ in the real axis, and so we can choose $r_{n+1}$ and $\gamma_{n+1} \subset f(\gamma_n)$ so that they satisfy~\eqref{eq3.1}, \eqref{contains} and~\eqref{eq3.2} with~$n$ replaced by $n+1$, as required.

Thus, to complete the proof of parts~(a) and~(b)(i) it is sufficient to show that if case~(3) of Theorem~\ref{main2} holds for~$\gamma'_n$, then we obtain a contradiction.

If case~(3) holds for~$\gamma'_n$, then the image under~$f$ of some subcurve of $\gamma'_n$ is contained in
\[
\overline A(M(r_n)^{1/L_n}, M(r_n)^{L_{n+1}})\subset \{z:|z|\ge M(r_n)^{1/L_0}\}\subset \{z:|z|\ge R\},
\]
and also winds round~$0$ through an angle of at least $2\pi$. Hence, in part~(a), there is a {\mconn} component of $F(f)$ that contains $f(\gamma'_n)$,  by the symmetry of $f$ in the real axis and \eqref{Rsize}. But this is impossible by Lemma~\ref{connectivity} since
\[
f(\gamma'_n)\subset f^{n+1}(\gamma_0)\subset f^{n+1}(U),
\]
and $U$ is simply connected. This completes the proof of part~(a).

In part~(b)(i) we obtain a contradiction by using Lemma~\ref{AR-component}. Let $\tilde R=M(r_n)^{1/L_0}$. Then $\tilde R\ge R\ge R_2$ so, by Lemma~\ref{AR-component} and the symmetry of $f$ in the real axis, the set $A_{\tilde R/2}(f)\cap \{z:\Im z\ge 0\}$ has a component, $K$ say, that is unbounded and meets $C(\tilde R)$. Thus the image curve $f(\gamma'_n)$ must meet $K$. Therefore, by \eqref{contains}, there exists $z_0 \in \gamma_0$ such that $f^k(z_0)\in \gamma_k$, for $k=1,\ldots,n$, and
\begin{equation}\label{zn}
z_{n+1}=f^{n+1}(z_0) \in K.
\end{equation}
Now
\[
|f^k(z_0)|\ge r_k\ge \mu^k(R),\;\;\text{for } k=0,1,\ldots n,
\]
by \eqref{eq3.1} and \eqref{eq3.2},
\[
|f^j(z_{n+1})|\ge M^j(\tilde R/2),\;\;\text{for } j\ge 0,
\]
because $z_{n+1}\in K$, and
\[
\tilde R/2=\tfrac12 M(r_n)^{1/L_0} \ge M(r_n)^{1/L}=\mu(r_n),
\]
by \eqref{Rsize} and the fact that $L_0=L-1$. We deduce that
\[
|f^n(z_0)|\ge \mu^n(R),\;\;\text{for } n\ge 0.
\]
Thus $z_0\in Q_{\eps,R}(f)$ and this contradicts the fact that in part~(b)(i) the curve $\gamma_0$ was chosen to lie in $Q_{\eps,R}(f)^c$. So case~(3) of Theorem~\ref{main2} leads to a contradiction. This completes the proof of part~(b)(i).

Next we prove part~(b)(ii). First recall that $\eps$ and $R$ satisfy \eqref{Lsize} and \eqref{Rsize}, and that $Q_{\eps,R}(f)$ is closed and contains $A_R(f)$. For $n\in\N$, let $G_n$ denote the bounded simply connected domain obtained by taking the union of the set
\[
\{z:|z|<r'_n\}\cup \bigcup\{ U: U\;\text{is a component of } Q_{\eps,R}(f)^c, U\cap C(r'_n)\ne\emptyset\}
\]
with its bounded complementary components, where the sequence $(r'_n)$ is chosen so large that
\[
R^L\le r'_1< r'_2<\cdots,\quad r'_n\to \infty\;\text{ as } n\to\infty,
\]
and the sets $\partial G_n$ are disjoint.  This is possible by part~(b)(i). Then
\[
\partial G_n\subset Q_{\eps,R}(f),\;\;\text{for } n\in \N.
\]
Clearly $G_n \subset G_{n+1}$ for $n\in \N$ and $\bigcup_{n\in\N}G_n=\C$.

By Lemma~\ref{fast} part~(a), there exists $N\in\N$ such that $A_R(f) \cup \bigcup_{n \geq N} \partial G_n$ is a spider's web in $Q_{\eps,R}(f)$. Thus $I(f)$ is a spider's web by Lemma~\ref{SWs} part~(a).

To prove part~(c), let $G_n$ be the domains constructed above such that $\partial G_n\subset Q_{\eps,R}(f)$ and for $n\in \N$ let $G'_n$ denote the bounded simply connected domain obtained by taking the union of the set
\[
G_n\cup \bigcup\{U: U\;\text{is a Fatou component of } f, U\cap \partial G_n\ne\emptyset\}
\]
with its bounded complementary components. The domains $G'_n$ are bounded, by the result of part~(a), and we can assume by taking a subsequence that they are disjoint. Now each of the Fatou components~$U$ in the definition of $G'_n$, $n\in\N$, meets $Q_{\eps,R}(f)$ and $U$ is {\sconn} by hypothesis. Thus, by part~(a), \begin{eqnarray*}
U &\subset& \{z:|f^k(z)|\ge (\mu^k(R))^{1/L},\;\text{ for } k\ge 0\}\\
&\subset& \{z:|f^k(z)|\ge \nu^k(R),\;\text{ for } k\ge 0\},
\end{eqnarray*}
where $\nu(r)=\mu(r)^{\eps}=M(r)^{\eps^2}$, since $1/L=\eps$.

Thus, for such a Fatou component $U$, we have $U\subset Q_{\eps^2,R}(f)$. Since $Q_{\eps,R}(f) \subset Q_{\eps^2,R}(f)$, and both $Q_{\eps^2,R}(f)$ and $J(f)$ are closed, we deduce that
\[
\partial G'_n\subset Q_{\eps^2,R}(f)\cap J(f),\;\;\text{for } n\in \N.
\]
Clearly $G'_{n}\subset G'_{n+1}$ for $n\in\N$ and $\bigcup_{n\in\N}G'_n=\C$.

Hence, by Lemma~\ref{fast} part~(c), $Q_{\eps^2,R}(f)\cap J(f)$ contains a spider's web. Thus both $I(f)\cap J(f)$ and $J(f)$ contain spiders' webs, so the result of part~(c) follows from Lemma~\ref{SWs} parts~(b) and~(c). This completes the proof of Theorem~\ref{main3}.

\end{document}